\newtheorem{theorem}{Theorem}
\newtheorem{definition}[theorem]{Definition}
\newtheorem{example}[theorem]{Example}
\newtheorem{lemma}[theorem]{Lemma}
\newtheorem{proposition}[theorem]{Proposition}
\newtheorem{remark}[theorem]{Remark}
\newenvironment{proof}[1][Proof]{\noindent\textbf{#1.} }{\ \rule{0.5em}{0.5em}}
\begin{document}

\title{Commutative rings behind divisible residuated lattices}
\author{Cristina Flaut and Dana Piciu}
\date{}
\maketitle

\begin{abstract}
Divisible residuated lattices are algebraic structures corresponding to a
more comprehensive logic than Hajek's basic logic with an important
significance in the study of fuzzy logic. The purpose of this paper is to
investigate commutative rings whose the lattice of ideals can be equipped
with a structure of divisible residuated lattice. We show that these rings
are multiplication rings. A characterization, more examples and their
connections to other classes of rings are established. Furthermore, we
analyze the structure of divisible residuated lattices using finite
commutative rings. From computational considerations, we present an explicit
construction of isomorphism classes of divisible residuated lattices (that
are not BL-algebras) of small size $n$ ($2\leq n\leq 6$) and we give
summarizing statistics.

\textbf{Keywords:} multiplication ring, ideal, divisible residuated lattice

\textbf{AMS Subject Classification 2010:} 03G10, 03G25, 06A06, 06D05, 08C05,
06F35.
\end{abstract}

\section{\textbf{Introduction}}

\bigskip Nonclassical logics are strongly related to computer science and
they have been represented as algebras.

Residuated lattices were introduced in \cite{[Di; 38]}, \cite{[WD; 39]} and
their\ study is originated in the context of theory of rings since the
lattice of ideals of a commutative ring is a residuated lattice, see \cite%
{[Bl; 53]}.

Rings are important algebraic tools and many authors (\cite{[Bl; 53]}, \cite%
{[BN; 09]}, \cite{[LN; 18]}, \cite{[MANH]}, \cite{[FP; 22]}, \cite{[FP; 23]}%
, \cite{[TT; 22]}, etc) have studied algebras of logic connected with
certain classes of rings.

Divisible residuated lattices (or residuated lattice ordered monoids) were
introduced in \cite{[S; 65]} as a unifying concept for Heyting algebras and
Abelian lattice ordered groups. These residuated lattices satisfying
divisibility condition are connected with algebras in t-norm based fuzzy
logics, being examples of BL-algebras.

It is known that BL-rings and MTL-rings are commutative unitary rings whose
ideals form a BL-algebra and respectively an MTL-algebra.

Since BL-algebras are MTL-algebras satisfying divisibility property the
question that one can ask is what are commutative rings whose lattice of
ideals satisfies divisibility condition.

In this paper, we show that divisible residuated lattices are connected with
multiplication rings and using these algebras we obtain a new description
for multiplication rings, see Theorem \ref{t8} and Proposition \ref%
{Proposition_4}.

We show that the class of multiplication rings contains other known classes
of commutative rings: MV-rings, BL-rings, Von Neumann regular rings, rings
which are principal ideal domains and some types of finite unitary
commutative rings, see Proposition \ref{p10} and Theorem \ref{Theorem_2}.

We prove that a commutative ring which has at most $5$\ distinct ideals is
principal and its lattice of ideals is a divisible residuated lattice.
Moreover, the ring $A=A_{1}\times ...\times A_{q}$, such that the numbers of
ideals for the rings $A_{i}$ are $n_{A_{i}}$, with $n_{A_{i}}\in
\{2,3,5\},i\in \{1,2,...,q\}$, is a principal ideal ring, therefore a
multiplication ring, see Remark \ref{r29}.

Using computer algorithms, in \cite{[BV; 10]}, isomorphism classes of
divisible residuated lattices of size $n\leq 12$ were counted. In this
paper, using Theorem \ref{Theorem_4} we construct all (up to an isomorphism)
divisible residuated lattices with $n\leq 6$ elements (that are not
BL-algebras) using the ordinal product BL-algebras. Also, we present
summarizing statistics. This method can be used to construct finite
divisible residuated lattices of larger size, the inconvenience being the
large number of \ algebras that should be generated (for example, there are $%
23$ algebras with $n=6$ \ elements and for $n=7$ there are $49$ divisibile
residuated lattices).

\section{\textbf{Preliminaries}}

\begin{definition}
\label{Definition_1} (\cite{[Di; 38]}, \cite{[WD; 39]})\textbf{\ \ }A \emph{%
(commutative) residuated lattice} \ is an algebra $(L,\wedge ,\vee ,\odot
,\rightarrow ,0,1)$ equipped with an order $\leq $ such that:

(LR1) $\ (L,\wedge ,\vee ,0,1)$ is a bounded lattice;

(LR2) $\ \ (L,\odot ,1)$ is a commutative ordered monoid;

(LR3) $\ z\leq x\rightarrow y$\ iff $x\odot z\leq y,$\ for all $x,y,z\in L.$
\end{definition}

In a residuated lattice for $x\in L,$ we denote $x^{\ast }=x\rightarrow 0.$

\begin{example}
\label{Example_2} It is known that, for a commutative unitary ring $A,$ the
lattice of ideals $(Id(A),\cap ,+,\otimes \rightarrow ,0=\{0\},1=A)$ is a
residuated lattice in which the order relation is $\subseteq $ and 
\begin{equation*}
I+J=<I\cup J>=\{i+j,i\in I,j\in J\}\text{, }
\end{equation*}%
\begin{equation*}
I\otimes J=\{\underset{k=1}{\overset{n}{\sum }}i_{k}j_{k},\text{ }i_{k}\in
I,j_{k}\in J\}\text{, }
\end{equation*}%
\begin{equation*}
I\rightarrow J=(J:I)=\{x\in A,x\cdot I\subseteq J\}\text{,}
\end{equation*}%
\begin{equation*}
Ann\left( I\right) =\left( \mathbf{0}:I\right) \text{, where }\mathbf{0}=<0>,%
\text{ }
\end{equation*}%
\textbf{\ }for every $I,J\in Id\left( A\right) ,$ see \cite{[TT; 22]}. $I+J,$
$I\otimes J,I\rightarrow J$ and $Ann\left( I\right) $ are ideals of $A,$
called sum, product, quotient and annihilator, see \cite{[BP; 02]}.
\end{example}

In a residuated lattice $(L,\wedge ,\vee ,\odot ,\rightarrow ,0,1)$ we
consider the following identities:

\begin{equation*}
(prel)\qquad (x\rightarrow y)\vee (y\rightarrow x)=1\qquad \text{ (\textit{%
prelinearity)}};
\end{equation*}%
\begin{equation*}
(div)\qquad x\odot (x\rightarrow y)=x\wedge y\qquad \text{ (\textit{%
divisibility)}}.
\end{equation*}

\begin{definition}
\label{Definition_2} (\cite{[COM; 00]}, \cite{[CHA; 58]}, \cite{[NL; 03]}, 
\cite{[I; 09]}, \cite{[T; 99]}) \ A residuated lattice $L$ is called:

\begin{enumerate}
\item[$(i)$] \emph{an MTL-algebra}\textit{\ }if $L$ verifies $(prel)$
condition;

\item[$(ii)$] \emph{\ divisible }if $L$ verifies $(div)$ condition;

\item[$(iii)$] \emph{a BL-algebra}\textit{\ }if $L$ verifies $(prel)+(div)$
conditions. A \emph{BL-chain }is a totally ordered BL-algebra, i.e., a
BL-algebra such that its lattice order is total.\medskip \medskip

\item[$(iv)$] \emph{an MV-algebra}\textit{\ }if $L$ is a BL-algebra in which 
$x^{\ast \ast }=x,$ for every $x\in L$.
\end{enumerate}
\end{definition}

\begin{proposition}
\label{Proposition_3} (\cite{[I; 09]}) \textit{\ Let }$(L,\vee ,\wedge
,\odot ,\rightarrow ,0,1)$\textit{\ be a residuated lattice. Then we have
the equivalences:}

\textit{(i) }$L\ $satisfies $(\func{div})$\textit{\ condition;}

\textit{(ii) }$\ \ $For all $x,y\in L$ if $y\leq x$ then there exists $z\in
L $ such that $y=z\odot x.$
\end{proposition}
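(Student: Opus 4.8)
The plan is to prove both implications by unwinding the residuation adjunction (LR3), after first recording a one-sided inequality that holds in \emph{every} residuated lattice. Namely, I claim that $x\odot (x\rightarrow y)\leq x\wedge y$ always. Indeed, applying (LR3) to the (trivially true) inequality $x\rightarrow y\leq x\rightarrow y$ gives $x\odot (x\rightarrow y)\leq y$; and since $x\rightarrow y\leq 1$ together with monotonicity of $\odot$ (part of (LR2)) gives $x\odot (x\rightarrow y)\leq x\odot 1=x$, we conclude $x\odot (x\rightarrow y)\leq x\wedge y$. Consequently, verifying the $(\func{div})$ identity is equivalent to establishing only the reverse inequality $x\wedge y\leq x\odot (x\rightarrow y)$. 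This reduction is the organizing principle of the whole argument.

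For the implication (i) $\Rightarrow$ (ii): assume $L$ satisfies $(\func{div})$ and suppose $y\leq x$. Then $x\wedge y=y$, so the divisibility equation reads $y=x\wedge y=x\odot (x\rightarrow y)$. Setting $z:=x\rightarrow y$ and using commutativity of $\odot$, we obtain $y=z\odot x$, which is exactly the assertion in (ii).

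For the converse (ii) $\Rightarrow$ (i): fix arbitrary $x,y\in L$ and apply (ii) to the pair $x\wedge y\leq x$. This yields some $z\in L$ with $x\wedge y=z\odot x=x\odot z$. Since $x\wedge y\leq y$, we have $x\odot z\leq y$, and the adjunction (LR3) then gives $z\leq x\rightarrow y$. Monotonicity of $\odot$ now upgrades this to $x\wedge y=x\odot z\leq x\odot (x\rightarrow y)$. Combined with the universally valid inequality $x\odot (x\rightarrow y)\leq x\wedge y$ from the first paragraph, we get $x\odot (x\rightarrow y)=x\wedge y$, i.e.\ $(\func{div})$ holds.

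I do not expect a genuine obstacle here: once the always-true inequality is isolated, each direction is a two-line application of the Galois adjunction (LR3) and the monotonicity of $\odot$. The only subtlety worth stating explicitly is the correct \emph{choice of witness}: in (i) $\Rightarrow$ (ii) the right witness is $z=x\rightarrow y$, while in (ii) $\Rightarrow$ (i) one must feed the lattice element $x\wedge y$ (not $y$) into hypothesis (ii) and then recover $z\leq x\rightarrow y$ via adjunction. Everything else is routine manipulation within the residuated-lattice axioms.
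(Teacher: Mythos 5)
Your proof is correct. The paper does not actually prove this proposition --- it is quoted from the cited reference [I; 09] without argument --- so there is no in-paper proof to compare against; your argument is the standard one, and the key inequality you isolate, $x\odot (x\rightarrow y)\leq x\wedge y$ in every residuated lattice, is exactly the fact the paper itself invokes (without proof) in its argument for Lemma \ref{Lemma_0}.
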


\begin{example}
\label{Example_3}\emph{(}\cite{[I; 09]}\emph{) }\textit{\ }We give an
example of a divisible residuated lattice which is not a BL-algebra. Let $%
L=\{0,a,b,c,1\}$ with $0\leq a,b\leq c\leq 1,$ but $a,b$ incomparable and
the following operations: 
\begin{equation*}
\begin{array}{c|ccccc}
\rightarrow & 0 & a & b & c & 1 \\ \hline
0 & 1 & 1 & 1 & 1 & 1 \\ 
a & b & 1 & b & 1 & 1 \\ 
b & a & a & 1 & 1 & 1 \\ 
c & 0 & a & b & 1 & 1 \\ 
1 & 0 & a & b & c & 1%
\end{array}%
,\hspace{5mm}%
\begin{array}{c|ccccc}
\odot & 0 & a & b & c & 1 \\ \hline
0 & 0 & 0 & 0 & 0 & 0 \\ 
a & 0 & a & 0 & a & a \\ 
b & 0 & 0 & b & b & b \\ 
c & 0 & a & b & c & c \\ 
1 & 0 & a & b & c & 1%
\end{array}%
.
\end{equation*}
\end{example}

We remark that $(a\rightarrow b)\vee (b\rightarrow a)=c\neq 1.$

\section{Divisible residuated lattices and Multiplication rings}

\begin{lemma}
\label{Lemma_0} \textit{\ Let }$(L,\vee ,\wedge ,\odot ,\rightarrow ,0,1)$%
\textit{\ be a residuated lattice. The following assertions are equivalent:}

\textit{(i) }$L$ is divisible\textit{;}

\textit{(ii) }$\ \ $F\textit{or every \thinspace }$x,y,z\in L,$ $%
z\rightarrow (x\odot (x\rightarrow y))=z\rightarrow (x\wedge y).$\textit{\ }
\end{lemma}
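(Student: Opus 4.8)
The plan is to prove the two implications separately, noting that divisibility of $L$ is by definition the identity $x\odot (x\rightarrow y)=x\wedge y$ holding for all $x,y\in L$. For the implication $(i)\Rightarrow (ii)$, I would observe that this is essentially immediate: if $L$ is divisible then $x\odot (x\rightarrow y)$ and $x\wedge y$ are literally the same element of $L$, so applying the unary operation $z\rightarrow (-)$ to each side yields equal results $z\rightarrow (x\odot (x\rightarrow y))=z\rightarrow (x\wedge y)$ for every $z$. No residuation machinery is needed here beyond the definition.

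The substantive direction is $(ii)\Rightarrow (i)$, and the key idea is to specialize the quantified variable $z$ to the monoid unit $z=1$. First I would record the standard identity $1\rightarrow a=a$, valid in any residuated lattice: by the residuation adjunction (LR3) we have $z\leq 1\rightarrow a$ iff $1\odot z\leq a$, and since $1$ is the identity of the monoid $(L,\odot ,1)$ by (LR2) this reads $z\leq a$; taking $z=1\rightarrow a$ gives $1\rightarrow a\leq a$, while taking $z=a$ gives $a\leq 1\rightarrow a$, so $1\rightarrow a=a$. Substituting $z=1$ into the hypothesis $(ii)$ then collapses both sides to $x\odot (x\rightarrow y)=x\wedge y$, which is exactly the $(div)$ condition, establishing that $L$ is divisible.

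I do not anticipate a genuine obstacle in this argument; the only point requiring care is the auxiliary identity $1\rightarrow a=a$, which is what makes the specialization $z=1$ strip away the outer implication and recover divisibility. Everything else reduces to the definitions in Definition \ref{Definition_1} and the reading of $(div)$ as an identity that can be transported through the order-theoretic operations.
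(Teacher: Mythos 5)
Your proof is correct, but the second implication takes a different specialization from the paper's. You set $z=1$ and use the identity $1\rightarrow a=a$ (which you correctly derive from (LR2) and (LR3)) to collapse both sides of (ii) directly to the divisibility identity; the paper instead sets $z=x\wedge y$, obtaining $(x\wedge y)\rightarrow (x\odot (x\rightarrow y))=1$, i.e.\ $x\wedge y\leq x\odot (x\rightarrow y)$, and then invokes the inequality $x\odot (x\rightarrow y)\leq x\wedge y$, valid in every residuated lattice, to conclude equality. Your route is slightly more self-contained, since the only auxiliary fact you need is $1\rightarrow a=a$, which you prove from scratch, whereas the paper's route leans on the standard (unproved there) inequality $x\odot (x\rightarrow y)\leq x\wedge y$; on the other hand, the paper's choice $z=x\wedge y$ only uses the instance of (ii) at a single $z$ tied to $x$ and $y$ and the general fact that $a\rightarrow b=1$ iff $a\leq b$, which is the idiom used repeatedly elsewhere in the paper (e.g.\ in the proof of Proposition \ref{p14}). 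Both arguments are complete and equally short.
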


\begin{proof}
$(i)\Rightarrow (ii).$ Obviously.

$(ii)\Rightarrow (i).$ $\ $For $z=$ $x\wedge y$ we obtain $(x\wedge
y)\rightarrow (x\odot (x\rightarrow y))=1,$ so $L$ is divisible, since in a
residuated lattice $x\odot (x\rightarrow y)\leq x\wedge y.$
\end{proof}

\bigskip \medskip

Unitary commutative rings for which the lattice of ideals is an MV-algebra,
a BL-algebra or an MTL-algebra are called MV-rings, BL-rings and
respectively MTL-rings and are introduced in \cite{[BN; 09]}, \cite{[LN; 18]}
and \cite{[MANH]}.

\begin{definition}
\label{Definition_4} (\cite{[A; 76]}) \ Let $A$ be a commutative unitary
ring. An ideal $I\in Id(A)$ is called a multiplication ideal if for every
ideal $J\subseteq I$ there exists an ideal $K$ such that $J=I\otimes K.$ The
ring $A$ is called a multiplication ring if all its ideals are
multiplication ideals.
\end{definition}

Multiplication rings have been studied in \cite{[G; 74]}, \cite{[GM; 75]}
and \cite{[M; 64]}. It is well known that a multiplication ring is a subring
of a cartesian product of Dedekind domains and special primary rings, see 
\cite{[M; 64]}.

Using Proposition \ref{Proposition_3} we deduce that:

\begin{theorem}
\label{t8}A unitary commutative ring is a multiplication ring if an only if
its lattice of ideals is a divisible residuated lattice.
\end{theorem}

Using Lemma \ref{Lemma_0} we obtain a new characterization for
multiplication rings:

\begin{proposition}
\label{Proposition_4} \textit{Let }$A$\textit{\ be a commutative unitary
ring. The following assertions are equivalent:}

(i) $A$ is a multiplication ring;

(ii) $I\otimes (J:I)=I\cap J,$ for every $I,J\in Id(A);$

(iii) $((I\otimes (J:I)):K)=((I\cap J):K),$ for every $I,J,K\in Id(A).$
\end{proposition}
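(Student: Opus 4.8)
The plan is to prove the chain of equivalences $(i)\Leftrightarrow(ii)\Leftrightarrow(iii)$ by translating the abstract residuated-lattice identities into the concrete ideal operations of $Id(A)$, using Theorem \ref{t8} to know that $A$ is a multiplication ring precisely when $Id(A)$ is divisible, and using Lemma \ref{Lemma_0} to handle the passage to condition $(iii)$. Recall from Example \ref{Example_2} that in $Id(A)$ the meet is $\cap$, the monoid product is $\otimes$, and the residuum is $I\rightarrow J=(J:I)$. Under this dictionary the divisibility identity $(div)$, namely $x\odot(x\rightarrow y)=x\wedge y$, reads exactly as $I\otimes(J:I)=I\cap J$, which is condition $(ii)$.

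For $(i)\Leftrightarrow(ii)$ I would argue as follows. By Theorem \ref{t8}, $A$ is a multiplication ring iff $Id(A)$ is a divisible residuated lattice. By Definition \ref{Definition_2}$(ii)$, $Id(A)$ is divisible iff it satisfies $(div)$ for all elements, i.e.\ $I\odot(I\rightarrow J)=I\wedge J$ for all $I,J\in Id(A)$. Substituting the ideal operations from Example \ref{Example_2} gives precisely $I\otimes(J:I)=I\cap J$ for all $I,J\in Id(A)$, which is $(ii)$. Thus $(i)$ and $(ii)$ are literally the same statement once the notation is unwound, and this direction is essentially a verification that no subtlety is hidden in the translation.

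For $(ii)\Leftrightarrow(iii)$ I would invoke Lemma \ref{Lemma_0}, which states that $L$ is divisible iff for all $x,y,z$ one has $z\rightarrow(x\odot(x\rightarrow y))=z\rightarrow(x\wedge y)$. Reading this in $Id(A)$ with the residuum $z\rightarrow w=(w:K)$ for $z=K$, the left-hand side becomes $((I\otimes(J:I)):K)$ and the right-hand side becomes $((I\cap J):K)$, which is exactly the identity in $(iii)$. So $(iii)$ is the statement of divisibility as packaged by Lemma \ref{Lemma_0}, while $(ii)$ is divisibility in its raw form; both are equivalent to $Id(A)$ being divisible, hence to each other. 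Concretely I would close the loop $(ii)\Rightarrow(iii)$ by applying $(\,\cdot\,:K)$ to both sides of $(ii)$, and $(iii)\Rightarrow(ii)$ by specializing $K$ to recover divisibility via the Lemma \ref{Lemma_0} argument (taking $K=I\cap J$ so that the right-hand residuum becomes $1=A$, forcing $I\otimes(J:I)\supseteq I\cap J$, while the reverse inclusion $I\otimes(J:I)\subseteq I\cap J$ holds automatically in any residuated lattice).

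The only place demanding care, rather than routine symbol-pushing, is the final specialization in $(iii)\Rightarrow(ii)$: one must choose $K$ so that condition $(iii)$ collapses to divisibility. The natural choice mirrors the proof of Lemma \ref{Lemma_0}, where setting $z=x\wedge y$ yields $(x\wedge y)\rightarrow(x\odot(x\rightarrow y))=1$; translated to ideals this is $((I\otimes(J:I)):(I\cap J))=A$, i.e.\ $I\cap J\subseteq I\otimes(J:I)$, and combined with the always-valid reverse inclusion $I\otimes(J:I)\subseteq I\cap J$ (coming from $x\odot(x\rightarrow y)\leq x\wedge y$ in any residuated lattice) we recover $(ii)$. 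I expect this to be the main conceptual step; everything else is a transcription of Theorem \ref{t8} and Lemma \ref{Lemma_0} into the ideal-theoretic language of Example \ref{Example_2}.
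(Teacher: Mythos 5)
Your proposal is correct and follows exactly the route the paper intends: the paper gives no explicit proof but prefaces the proposition with ``Using Lemma \ref{Lemma_0}...'', meaning $(i)\Leftrightarrow(ii)$ is Theorem \ref{t8} plus the translation of $(div)$ into ideal language, and $(ii)\Leftrightarrow(iii)$ is Lemma \ref{Lemma_0} specialized to $Id(A)$, with the key step $(iii)\Rightarrow(ii)$ obtained by taking $K=I\cap J$ just as the lemma's proof takes $z=x\wedge y$. You have simply written out in full what the paper leaves implicit.
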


We recall that a residuated lattice $(L,\vee ,\wedge ,\odot ,\rightarrow
,0,1)$ in which $x^{2}=x,$ for all $x\in L,$ is called a \emph{Heyting
algebra}, see \cite{[I; 09]} and \cite{[T; 99]}.

In \cite{[BNM; 10]} was proved that unitary commutative rings for which the
semiring of ideals, under ideal sum and ideal product, are Heyting algebras
are exactly Von Neumann regular rings, i.e. commutative rings $A$ in which
for every element $x\in A$ there exists an element $a\in A$ such that $%
x=a\cdot x^{2}.$

Since MV-algebras, BL-algebras and Heyting algebras are divisible residuated
lattices we have the following result:

\begin{proposition}
\label{p10}MV-rings, BL-rings and Von Neumann regular rings are
multiplication rings.
\end{proposition}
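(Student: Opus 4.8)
The plan is to route everything through Theorem \ref{t8}, which says that a commutative unitary ring is a multiplication ring exactly when its lattice of ideals is a divisible residuated lattice. Hence it suffices to check, for each of the three classes, that the ideal lattice $Id(A)$ satisfies the divisibility identity $(div)$.

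First I would dispose of the BL-ring and MV-ring cases, which are immediate from the definitions. By Definition \ref{Definition_2}$(iii)$ a BL-algebra satisfies $(prel)+(div)$, so in particular it is divisible, and by Definition \ref{Definition_2}$(iv)$ an MV-algebra is a BL-algebra, hence divisible as well. Therefore, if $A$ is a BL-ring (resp. MV-ring), then $Id(A)$ is by definition a BL-algebra (resp. MV-algebra), hence divisible, and Theorem \ref{t8} yields that $A$ is a multiplication ring.

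The Von Neumann regular case needs one additional step, because these rings are characterized (as recalled above, following \cite{[BNM; 10]}) by the property that $Id(A)$ is a Heyting algebra, that is, a residuated lattice in which $x^{2}=x$ for every $x$. The essential point is to show that every Heyting algebra is divisible. I would first observe that idempotency of $\odot$ forces $\odot=\wedge$: the inequality $x\odot y\leq x\wedge y$ holds in any residuated lattice (from $x\odot y\leq x\odot 1=x$ and $x\odot y\leq 1\odot y=y$), while conversely $x\wedge y=(x\wedge y)\odot (x\wedge y)\leq x\odot y$ by monotonicity of $\odot$ together with idempotency. Once $\odot=\wedge$, the divisibility identity $x\odot (x\rightarrow y)=x\wedge y$ becomes $x\wedge (x\rightarrow y)=x\wedge y$, a routine residuation computation: the inclusion $x\wedge (x\rightarrow y)\leq x\wedge y$ follows from $x\wedge (x\rightarrow y)\leq y$, an instance of (LR3), and the reverse inclusion follows from $y\leq x\rightarrow y$. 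Applying Theorem \ref{t8} once more then settles the Von Neumann regular case.

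I do not expect a real obstacle: the entire content lies in recognizing that MV-algebras, BL-algebras, and Heyting algebras are all divisible residuated lattices and then invoking Theorem \ref{t8}. The only implication requiring genuine verification is that Heyting algebras are divisible, and even that collapses to the two short order-theoretic computations above.
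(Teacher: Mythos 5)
Your proposal is correct and follows essentially the same route as the paper: the paper simply states (in the sentence preceding the proposition) that MV-algebras, BL-algebras and Heyting algebras are divisible residuated lattices and combines this with Theorem \ref{t8} and the characterization of Von Neumann regular rings via Heyting algebras from \cite{[BNM; 10]}. The only difference is that you explicitly verify that idempotency of $\odot$ forces $\odot=\wedge$ and hence divisibility, a fact the paper takes as known; your verification of that step is sound.
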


\begin{proposition}
\label{prop_mv} A commutative unitary ring is:

(i) a BL-ring if and only if it is an MTL-ring and a multiplication ring;

(ii) an MV-ring if and only if \ it is an multiplication MTL-ring and $%
Ann(Ann(I))=I,$ for every $I\in Id\left( A\right) .$
\end{proposition}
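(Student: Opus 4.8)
The plan is to prove each biconditional in \ref{prop_mv} by combining the definitions of the ring classes with the algebraic characterizations already established in the excerpt. The key observation driving both parts is the translation dictionary between ring-theoretic and lattice-theoretic conditions: a BL-ring is one whose ideal lattice is a BL-algebra, an MTL-ring one whose ideal lattice is an MTL-algebra, and by Theorem \ref{t8} a multiplication ring is precisely one whose ideal lattice is a divisible residuated lattice. So the entire proposition is really a transcription of facts about residuated lattices recorded in Definition \ref{Definition_2}.

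For part $(i)$, the plan is to invoke the defining clause of Definition \ref{Definition_2}$(iii)$: a residuated lattice is a BL-algebra exactly when it satisfies both $(prel)$ and $(div)$. Reading this through the dictionary, $Id(A)$ is a BL-algebra if and only if $Id(A)$ satisfies $(prel)$ (i.e. $A$ is an MTL-ring) and $Id(A)$ satisfies $(div)$ (i.e., by Theorem \ref{t8}, $A$ is a multiplication ring). Thus $A$ is a BL-ring iff it is simultaneously an MTL-ring and a multiplication ring, which is exactly the claim. The only thing to be careful about is that "MTL-ring and multiplication ring" must be read conjunctively on the same ring $A$, so that $Id(A)$ inherits both identities at once.

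For part $(ii)$, the plan is to start from Definition \ref{Definition_2}$(iv)$: an MV-algebra is a BL-algebra satisfying $x^{\ast\ast}=x$ for all $x$. First I would rewrite "BL-algebra" using part $(i)$ as "MTL-ring and multiplication ring." Then I would translate the extra condition $x^{\ast\ast}=x$ into ring language: since $x^{\ast}=x\rightarrow 0$ and in $Id(A)$ the bottom element is $\mathbf{0}=\langle 0\rangle$ with $I\rightarrow \mathbf{0}=(\mathbf{0}:I)=Ann(I)$ as recorded in Example \ref{Example_2}, the involution identity $I^{\ast\ast}=I$ becomes $Ann(Ann(I))=I$ for every $I\in Id(A)$. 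Assembling these pieces gives: $A$ is an MV-ring iff $Id(A)$ is a BL-algebra with $Ann(Ann(I))=I$, iff $A$ is a multiplication MTL-ring with $Ann(Ann(I))=I$ for all $I$.

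The main obstacle I anticipate is not conceptual but notational bookkeeping: one must verify that the abstract operation $x^{\ast}=x\rightarrow 0$ genuinely coincides with the annihilator $Ann(I)$ under the identification of $0$ with $\langle 0\rangle$, and that double negation $x^{\ast\ast}$ unwinds to $Ann(Ann(I))$ rather than some other iterate. This is handled cleanly by Example \ref{Example_2}, which gives $I\rightarrow J=(J:I)$ and identifies $Ann(I)=(\mathbf{0}:I)=I\rightarrow \mathbf{0}$, so no genuine difficulty remains once that dictionary is in place. A secondary point worth stating explicitly is that in an MV-algebra both $(prel)$ and $(div)$ already hold automatically (being a BL-algebra), so the hypothesis "MTL-ring" in $(ii)$ supplies prelinearity and "multiplication ring" supplies divisibility, exactly as needed.
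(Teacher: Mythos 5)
Your proposal is correct and follows essentially the same route as the paper: both reduce the statement to the definitions (BL-algebra $=$ MTL-algebra $+$ $(div)$, MV-algebra $=$ BL-algebra with $x^{\ast\ast}=x$) together with Theorem \ref{t8} and the identification $I^{\ast}=I\rightarrow\mathbf{0}=Ann(I)$ from Example \ref{Example_2}. The paper's proof is just a terser version of your argument.
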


\begin{proof}
(i) Obviously, since a BL-algebra is an MTL-algebra satisfyng the
divisibility property.

(ii) MV-algebras are BL-algebras in which $x^{\ast \ast }=x,$ for every $x$.
\end{proof}

\begin{theorem}
\label{Theorem_2} (i) A commutative ring which is a principal ideal domain
is a \ multiplication ring;

(ii) A ring factor of a principal ideal domain is a multiplication ring.

(iii) A finite commutative unitary ring of the form $A=\mathbb{Z}%
_{k_{1}}\times \mathbb{Z}_{k_{2}}\times ...\times \mathbb{Z}_{k_{r}}$
(direct product of rings, equipped with componentwise operations), \textit{%
where} $k_{i}=p_{i}^{\alpha _{i}}$, $p_{i}$ \textit{is} \textit{a prime
number,\ is a \ multiplication ring.}
\end{theorem}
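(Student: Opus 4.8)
The plan is to prove all three parts by showing that the relevant lattices of ideals satisfy the divisibility condition, and then invoke Theorem~\ref{t8} to conclude that each ring is a multiplication ring. For this I will lean on the equivalent formulation of divisibility given in Proposition~\ref{Proposition_3}: it suffices to show that whenever $J\subseteq I$ are ideals, there exists an ideal $K$ with $J=I\otimes K$. Observe that this is precisely the condition defining a multiplication ideal in Definition~\ref{Definition_4}, so the whole task reduces to exhibiting, for each $J\subseteq I$, a suitable factor $K$.

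For part (i), let $A$ be a principal ideal domain and let $J\subseteq I$ be ideals. Writing $I=\langle a\rangle$ and $J=\langle b\rangle$, the containment $J\subseteq I$ gives $a\mid b$, say $b=ac$. I would then set $K=\langle c\rangle$ and verify directly that $I\otimes K=\langle a\rangle\otimes\langle c\rangle=\langle ac\rangle=\langle b\rangle=J$, using that the product of principal ideals is generated by the product of the generators. This establishes that every ideal of $A$ is a multiplication ideal, hence $A$ is a multiplication ring (equivalently, by Theorem~\ref{t8}, $Id(A)$ is a divisible residuated lattice). Part (ii) follows from (i) together with a standard correspondence argument: if $A$ is a principal ideal domain and $\mathfrak{a}$ is an ideal, the ideals of the factor ring $A/\mathfrak{a}$ correspond bijectively to the ideals of $A$ containing $\mathfrak{a}$, and this correspondence preserves inclusions, sums, and products. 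Thus a factorization $J=I\otimes K$ obtained upstairs in $A$ descends to the corresponding factorization in $A/\mathfrak{a}$, so every ideal of the factor ring is again a multiplication ideal.

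For part (iii), the key structural fact I would use is that the ideal lattice of a finite direct product decomposes as a product: $Id(A_1\times\cdots\times A_r)\cong Id(A_1)\times\cdots\times Id(A_r)$, with $\cap$, $+$, $\otimes$ and $\rightarrow$ all computed componentwise. Since the class of multiplication rings is closed under finite direct products—because the defining divisibility/factorization condition is checked componentwise and a product of divisible residuated lattices is again divisible—it suffices to treat each factor $\mathbb{Z}_{p_i^{\alpha_i}}$ separately. Each such $\mathbb{Z}_{p^{\alpha}}$ is a quotient $\mathbb{Z}/p^{\alpha}\mathbb{Z}$ of the principal ideal domain $\mathbb{Z}$, so it is a multiplication ring by part (ii); indeed its ideals form a finite chain $\langle 1\rangle\supseteq\langle p\rangle\supseteq\cdots\supseteq\langle p^{\alpha}\rangle\supseteq\langle 0\rangle$, where the factorization in the definition is immediate.

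The routine verifications here are genuinely routine, so the main point requiring care is the claim that divisibility (equivalently, the multiplication-ideal property) transfers correctly under the two operations of taking quotients and taking finite products. The quotient case hinges on checking that the ideal correspondence for $A/\mathfrak{a}$ respects the ideal product $\otimes$, not merely inclusion; this is the step I would state explicitly rather than leave implicit. The product case hinges on the componentwise description of all four operations on $Id(A)$, after which closure of divisible residuated lattices under direct products makes the conclusion formal. Once both transfer principles are in place, parts (ii) and (iii) follow from part (i) with essentially no further computation.
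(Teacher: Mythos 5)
Your proposal is correct, but it travels a noticeably different road from the paper's proof, even though both rest on Theorem~\ref{t8}. For part (i) the paper verifies the divisibility identity of Proposition~\ref{Proposition_4}(ii) directly, computing $I\cap J=\langle ij/d\rangle$, $(I:J)=\langle i_1\rangle$ with $d=\gcd\{i,j\}$ and checking $J\otimes (I:J)=I\cap J$ (plus a separate zero-ideal case); you instead use the factorization form of divisibility from Proposition~\ref{Proposition_3} and simply exhibit the cofactor $K=\langle c\rangle$ from $b=ac$, which is shorter and avoids the gcd bookkeeping. For part (ii) the paper does no work at all: it cites the external result that a factor of a principal ideal domain is an MV-ring and then invokes Proposition~\ref{p10}; your ideal-correspondence argument is self-contained and proves exactly what is needed (one small point worth stating explicitly: the cofactor $K$ found upstairs need not contain the ideal $\mathfrak{a}$ being factored out, but this is harmless since the surjection $\pi:A\to A/\mathfrak{a}$ satisfies $\pi(I\otimes K)=\pi(I)\otimes\pi(K)$, so $\pi(K)$ still works, or one may replace $K$ by $K+\mathfrak{a}$). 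For part (iii) the paper appeals to closure of BL-rings under finite direct products, while you use the componentwise decomposition $Id(A_1\times\cdots\times A_r)\cong Id(A_1)\times\cdots\times Id(A_r)$ and the fact that divisibility, being an identity, passes to direct products of residuated lattices. The trade-off is that the paper's route, by going through MV-rings, also delivers the extra information recorded in its proof (the ideal lattice is an MV-algebra with $\prod_{i=1}^{r}(\alpha_i+1)$ elements), whereas your route is more elementary and does not depend on the cited results from \cite{[FP; 22]} and \cite{[LN; 18]}.
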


\begin{proof}
$(i).$ \textit{Let }$A$\textit{\ be a commutative ring which is a principal
ideal domain and }$I=<i>$, $J=<j>,$ be the principal non-zero ideals
generated by $i,j\in A$ $\backslash \{0\}$.

If $d=$\textit{gcd}$\{i,j\}$, then $i=i_{1}d$ and $j=j_{1}d$, with $1=$%
\textit{gcd}$\{i_{1},j_{1}\}$. We have$\ I\cap J=<ij/d>,$ $I\otimes J=<ij>$
and $\left( I:J\right) =$ $<i_{1}>.$

Thus, $J\otimes (I:J)=<j>\otimes <i_{1}>=<ij/d>=I\cap J$.

If $I=\{0\}$, since $A$ is an integral domain, we have $J\otimes (\mathbf{0}%
:J)=J\otimes Ann(J)=\mathbf{0=0}\cap J=$ $\mathbf{0}\otimes (J:\mathbf{0}),$
for every $J\in Id\left( A\right) \backslash \{0\}.$

$(ii).$ \textit{A ring factor of a principal ideal domain \ }is an MV-ring,
see \cite{[FP; 22]}. Using Proposition \ref{p10}, it is a multiplication
ring.

$(iii).$We apply $(ii)$ and Proposition \ref{p10} using the fact that
BL-rings are closed under finite direct products, see \cite{[LN; 18]}.
Moreover, $\left( Id\left( A\right) ,\cap ,+,\otimes \rightarrow
,\{0\},A\right) $ \textit{is a divisible residuated lattice with }$\overset{r%
}{\underset{i=1}{\prod }}\left( \alpha _{i}+1\right) $ \textit{elements},
since it is an \textit{MV-algebra, see }\cite{[FP; 22]}.
\end{proof}

\begin{example}
\label{Remark_2} 1) Following Theorem \ref{Theorem_2}, the ring of integers $%
(\mathbb{Z},\mathbb{+},\cdot )$ is a multiplication ring since $\mathbb{Z}$
is principal ideal domain.

2) Let $K$ be a field and $K\left[ X\right] $ be the polynomial ring. For $%
f\in K\left[ X\right] $, the quotient ring $A=K\left[ X\right] /\left(
f\right) $ is a multiplication ring since the lattice of ideals of this\
ring is an MV-algebra, see \cite{[FP; 22]}.
\end{example}

\begin{proposition}
\label{p14}If $A$ is a multiplication ring, then for every $I,J,K\in
Id\left( A\right) $ we have:

\begin{enumerate}
\item[$(c_{1})$] $Ann(Ann(Ann(I))\rightarrow I)=\{0\};$

\item[$(c_{2})$] $Ann(Ann(I\rightarrow J))=Ann(Ann(I))\rightarrow
Ann(Ann(J));$

\item[$(c_{3})$] $Ann(Ann(I\otimes J))=Ann(Ann(I))\otimes
Ann[Ann(Ann(I))\cap Ann(J)],$

\item[$(c_{4})$] $Ann(Ann(I\cap J))=Ann(Ann(I))\cap Ann(Ann(J));$

\item[$(c_{5})$] $Ann(J)\subseteq I\Rightarrow I\rightarrow Ann(Ann(I\otimes
J))=Ann(Ann(J));$

\item[$(c_{6})$] $I\otimes (J\cap K))=(I\otimes J)\cap (I\otimes K);$

\item[$(c_{7})$] $I\cap (J+K)=(I\cap J)+(I\cap K).$
\end{enumerate}
\end{proposition}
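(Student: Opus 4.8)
Since $A$ is a multiplication ring, Theorem \ref{t8} tells us that $(Id(A),\cap ,+,\otimes ,\rightarrow ,\{0\},A)$ is a divisible residuated lattice, and the plan is to read each of $(c_{1})$--$(c_{7})$ as an identity (or, for $(c_{5})$, a quasi-identity) in this lattice and to verify it from the residuated-lattice axioms together with divisibility. Writing $I^{\ast }=Ann(I)=I\rightarrow \{0\}$ as in Example \ref{Example_2}, so that $Ann(Ann(I))=I^{\ast \ast }$ and $Ann(Ann(Ann(I)))=I^{\ast \ast \ast }$, I would first assemble the standard calculus of $\ast $ valid in \emph{every} residuated lattice: antitonicity of $\ast $, $I\subseteq I^{\ast \ast }$, $I^{\ast \ast \ast }=I^{\ast }$, $(I+J)^{\ast }=I^{\ast }\cap J^{\ast }$, the currying law $(I\otimes J)^{\ast }=I\rightarrow J^{\ast }$, the fact that $I\mapsto I^{\ast \ast }$ is a nucleus (i.e.\ $I^{\ast \ast }\otimes J^{\ast \ast }\subseteq (I\otimes J)^{\ast \ast }$), and the fact that a residuum $I\rightarrow J$ with regular codomain ($J=J^{\ast \ast }$) is itself regular. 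To these I would add the two genuinely divisible ingredients, $I\otimes (I\rightarrow J)=I\cap J$ and the factorization of Proposition \ref{Proposition_3}, together with distributivity of the lattice reduct.

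The two purely lattice/monoid statements come first. Identity $(c_{7})$ is exactly distributivity of $Id(A)$, which holds because the lattice reduct of a divisible residuated lattice is distributive (equivalently, the ideal lattice of a multiplication ring is distributive). For $(c_{6})$ the inclusion $I\otimes (J\cap K)\subseteq (I\otimes J)\cap (I\otimes K)$ is immediate from monotonicity of $\otimes $, while the reverse inclusion is the known fact that multiplication distributes over meet in a divisible residuated lattice, obtained from the factorization in Proposition \ref{Proposition_3}.

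The heart of the argument is $(c_{2})$, from which several of the others drop out. For $(c_{2})$, the inclusion $(I\rightarrow J)^{\ast \ast }\subseteq I^{\ast \ast }\rightarrow J^{\ast \ast }$ follows by iterating contraposition ($I\rightarrow J\subseteq I^{\ast \ast }\rightarrow J^{\ast \ast }$) and then noting that $I^{\ast \ast }\rightarrow J^{\ast \ast }$ is regular, so applying $I\mapsto I^{\ast \ast }$ preserves the inclusion; the reverse inclusion is the substantive one and is where divisibility enters. Once $(c_{2})$ is in hand, $(c_{1})$ is a one-line corollary: taking antecedent $I^{\ast \ast }$ and consequent $I$ in $(c_{2})$ gives $(I^{\ast \ast }\rightarrow I)^{\ast \ast }=I^{\ast \ast }\rightarrow I^{\ast \ast }=A$, whence $(I^{\ast \ast }\rightarrow I)^{\ast }=\{0\}$ (as $a^{\ast \ast }=A$ forces $a^{\ast }=a^{\ast \ast \ast }=\{0\}$). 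Likewise $(c_{4})$ follows, since its easy inclusion $(I\cap J)^{\ast \ast }\subseteq I^{\ast \ast }\cap J^{\ast \ast }$ is monotonicity, while the reverse comes from $I^{\ast \ast }\cap J^{\ast \ast }=I^{\ast \ast }\otimes (I^{\ast \ast }\rightarrow J^{\ast \ast })=I^{\ast \ast }\otimes (I\rightarrow J)^{\ast \ast }\subseteq (I\otimes (I\rightarrow J))^{\ast \ast }=(I\cap J)^{\ast \ast }$, using divisibility, $(c_{2})$, and the nucleus inequality. Finally $(c_{5})$ I would obtain directly by currying: $I\rightarrow (I\otimes J)^{\ast \ast }=I\rightarrow (I\rightarrow J^{\ast })^{\ast }=(I\otimes (I\rightarrow J^{\ast }))^{\ast }=(I\cap J^{\ast })^{\ast }$, and the hypothesis $Ann(J)=J^{\ast }\subseteq I$ collapses $I\cap J^{\ast }$ to $J^{\ast }$, giving $(J^{\ast })^{\ast }=J^{\ast \ast }$.

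The step I expect to be the main obstacle is $(c_{3})$, which mixes product, meet and double negation asymmetrically, and which I would attack last, after $(c_{2})$, $(c_{4})$ and the nucleus lemma are available. Running the currying computation of $(c_{5})$ with $I$ replaced by $I^{\ast \ast }$ rewrites the factor $Ann[I^{\ast \ast }\cap J^{\ast }]=(I^{\ast \ast }\cap J^{\ast })^{\ast }$ as $I^{\ast \ast }\rightarrow (I^{\ast \ast }\otimes J)^{\ast \ast }$, so the right-hand side becomes $I^{\ast \ast }\otimes (I^{\ast \ast }\rightarrow (I^{\ast \ast }\otimes J)^{\ast \ast })=I^{\ast \ast }\cap (I^{\ast \ast }\otimes J)^{\ast \ast }$ by divisibility. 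The crux is then the lemma $(I^{\ast \ast }\otimes J)^{\ast \ast }=(I\otimes J)^{\ast \ast }$, whose nontrivial inclusion again uses the regular-codomain trick, after which $(c_{3})$ closes via the integrality inclusion $(I\otimes J)^{\ast \ast }\subseteq I^{\ast \ast }$. Pinning down the reverse inclusion in $(c_{2})$ and this last lemma, both of which resist the naive monotonicity arguments and genuinely require divisibility, is where I expect the real work to lie.
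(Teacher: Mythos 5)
Your overall framework (work entirely inside the divisible residuated lattice $Id(A)$, build the calculus of $I^{\ast}=Ann(I)$, and derive $(c_1)$, $(c_4)$ from $(c_2)$) is sound, and the pieces you actually carry out are correct: the derivations of $(c_5)$ by currying, of $(c_4)$ from $(c_2)$ plus the nucleus inequality, and of $(c_1)$ as a specialization of $(c_2)$ all check out, and are in places cleaner than the paper's direct computations. But the proposal has a genuine gap at exactly the point you flag yourself: the reverse inclusion of $(c_2)$, namely $I\rightarrow Ann(Ann(J))\subseteq Ann(Ann(I\rightarrow J))$, is never proved — you only say it "is the substantive one and is where divisibility enters." Since $(c_1)$, $(c_4)$ and your route to $(c_3)$ all lean on $(c_2)$, this single missing step leaves most of the proposition unproved. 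The paper closes it by the chain $(I\rightarrow J^{\ast\ast})\rightarrow (I\rightarrow J)^{\ast\ast}=(I\rightarrow J^{\ast\ast})^{\ast\ast}\rightarrow (I\rightarrow J)^{\ast\ast}\supseteq \bigl((I\otimes (I\rightarrow J^{\ast\ast}))\rightarrow J\bigr)^{\ast\ast}\overset{(div)}{=}\bigl((I\cap J^{\ast\ast})\rightarrow J\bigr)^{\ast\ast}\supseteq (J^{\ast\ast}\rightarrow J)^{\ast\ast}=A$.

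Note the last equality $(J^{\ast\ast}\rightarrow J)^{\ast\ast}=A$ is precisely $(c_1)$ (since $X^{\ast\ast}=A$ iff $X^{\ast}=\{0\}$), which the paper therefore proves \emph{first}, by a direct two-fold application of divisibility, and then feeds into $(c_2)$. Your plan inverts this dependency — you obtain $(c_1)$ as a corollary of $(c_2)$ — so if you fill your gap the way the paper does, the argument becomes circular; you would need either an independent proof of $(c_1)$ (as in the paper) or a genuinely different proof of $I\rightarrow J^{\ast\ast}\subseteq (I\rightarrow J)^{\ast\ast}$. Two smaller points: $(c_6)$ and $(c_7)$ are asserted as "known facts" rather than proved (the paper gives the one-line divisibility computations, and they should be included since distributivity of $Id(A)$ is part of what is being claimed); and the "crux lemma" $(I^{\ast\ast}\otimes J)^{\ast\ast}=(I\otimes J)^{\ast\ast}$ you defer for $(c_3)$ is actually immediate in any residuated lattice from $(I^{\ast\ast}\otimes J)^{\ast}=J\rightarrow I^{\ast\ast\ast}=J\rightarrow I^{\ast}=(I\otimes J)^{\ast}$, so that part of your plan is easier than you anticipate — the real outstanding obligation is only the $(c_2)$ inclusion.
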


\begin{proof}
$(c_{1}).$ From $Ann(I)\subseteq Ann(Ann(I))\rightarrow I\Rightarrow
Ann(Ann(Ann(I))\rightarrow I)\subseteq Ann(Ann(I))\Rightarrow
Ann(Ann(Ann(I))\rightarrow I)=Ann(Ann(I))\cap Ann(Ann(Ann(I))\rightarrow I)$ 
$\overset{(\func{div})}{=}Ann(Ann(I))\otimes \lbrack Ann(Ann(I))\rightarrow
Ann(Ann(Ann(I))\rightarrow I)]=$ $Ann(Ann(I))\otimes
Ann[(Ann(Ann(I))\rightarrow I))\otimes Ann(Ann(I))]\overset{(\func{div})}{=}%
Ann(Ann(I)\otimes Ann(Ann(Ann(I))\cap I)=Ann(Ann(I))\otimes Ann(I)=\mathbf{0}%
.$

$(c_{2}).$ We have $Ann(Ann(I\rightarrow J))\subseteq Ann(Ann(I))\rightarrow
Ann(Ann(J))=I\rightarrow Ann(Ann(J)).$ Also, $(I\rightarrow
Ann(Ann(J)))\rightarrow Ann(Ann(I\rightarrow J))=$ $Ann(Ann(I\rightarrow
Ann(Ann(J))))\rightarrow Ann(Ann(I\rightarrow J))\supseteq $ $%
Ann(Ann[(I\otimes (I\rightarrow Ann(Ann(J))))\rightarrow J])\overset{(\func{%
div})}{=}Ann(Ann[(I\cap Ann(Ann(J)))\rightarrow J])\supseteq
Ann(Ann[Ann(Ann(J))\rightarrow J])=A$

$\Rightarrow (I\rightarrow Ann(Ann(J)))\rightarrow Ann(Ann(I\rightarrow
J))=A\Rightarrow I\rightarrow Ann(Ann(J))\subseteq Ann(Ann(I\rightarrow J))$ 
$\Rightarrow Ann(Ann(I\rightarrow J))=I\rightarrow
Ann(Ann(J))=Ann(Ann(I))\rightarrow Ann(Ann(J)).$

$(c_{3}).$ From $Ann(Ann(I\otimes J))\subseteq Ann(Ann(I))\Rightarrow $ $%
Ann(Ann(I\otimes J))=Ann(Ann(I\otimes J))\cap Ann(Ann(I))=Ann[J\rightarrow
Ann(I)]\cap Ann(Ann(I))=$ $Ann(Ann(I))\otimes Ann[Ann(Ann(I))\cap Ann(J)].$

$(c_{4}).$ Obviously, $Ann(Ann(I\cap J))\subseteq Ann(Ann(I)),Ann(Ann(J)).$

Let $K\in Id(A)$ such that $K\subseteq Ann(Ann(I)),Ann(Ann(J)).$ Then $%
Ann(Ann(K))\subseteq Ann[Ann(I)+Ann(J)].$ But $K\subseteq Ann(Ann(K))$ and $%
Ann[Ann(I)+Ann(J)]=Ann(Ann(I))\cap Ann(Ann(J))\Rightarrow K\subseteq
Ann(Ann(I))\cap Ann(Ann(J)).$

$(c_{5}).$ $I\rightarrow Ann(Ann(I\otimes J))=Ann(I\otimes J)\rightarrow
Ann(I)=Ann[I\otimes (I\rightarrow Ann(J))]=Ann(I\cap Ann(J))=Ann(Ann(J)).$

$(c_{6}).$ Clearly $I\otimes (J\cap K)\subseteq (I\otimes J)\cap (I\otimes
K).$

Also, $(I\otimes J)\cap (I\otimes K)=(I\otimes J)\otimes \lbrack (I\otimes
J)\rightarrow (I\otimes K)]=I\otimes \lbrack J\otimes (J\rightarrow
(I\rightarrow (I\otimes K)))]=$ $I\otimes \lbrack J\cap (I\rightarrow
(I\otimes K))]=$ $I\otimes \lbrack (I\rightarrow (I\otimes K))\otimes
((I\rightarrow (I\otimes K))\rightarrow J)].$

But $K\subseteq I\rightarrow (I\otimes K)\Rightarrow (I\rightarrow (I\otimes
K))\rightarrow J\leq K\rightarrow J,$ so $(I\otimes J)\cap (I\otimes
K)\subseteq I\otimes (J\cap K)).$

$(c_{7}).$ Clearly, $I\cap (J+K)\supseteq (I\cap J)+(I\cap K).$ Also, we
have $I\cap (J+K)=(J+K)\otimes \lbrack (J+K)\rightarrow I]=[J\otimes
((J+K)\rightarrow I)]+[K\otimes ((J+K)\rightarrow I)]\subseteq \lbrack
J\otimes (J\rightarrow I)]+[K\otimes (K\rightarrow I)]=(I\cap J)+(I\cap K).$
\end{proof}

\section{Examples of divisible residuated lattices using commutative rings}

In this section we present ways to generate finite\ divisible residuated
lattices using finite commutative rings.

In \cite{[I; 09]}, Iorgulescu studies the influence of the condition $(div)$
on the ordinal product of two BL-algebras.

It is known that, if $\mathcal{L}_{1}=(L_{1},\wedge _{1},\vee _{1},\odot
_{1},\rightarrow _{1},0_{1},1_{1})$ and $\mathcal{L}_{2}=(L_{2},\wedge
_{2},\vee _{2},\odot _{2},\rightarrow _{2},0_{2},1_{2})$ are two BL-algebras
such that $1_{1}=0_{2}$ and $(L_{1}\backslash \{1_{1}\})\cap
(L_{2}\backslash \{0_{2}\})=\oslash ,$ then, the ordinal product of $%
\mathcal{L}_{1}$ and $\mathcal{L}_{2}$ is the residuated lattice $\mathcal{L}%
_{1}\boxtimes \mathcal{L}_{2}=(L_{1}\cup L_{2},\wedge ,\vee ,\odot
,\rightarrow ,0,1)$ where

\begin{equation*}
0=0_{1}\text{ and }1=1_{2},
\end{equation*}%
\begin{equation*}
x\leq y\text{ if }(x,y\in L_{1}\text{ and }x\leq _{1}y)\text{ or }(x,y\in
L_{2}\text{ and }x\leq _{2}y)\text{ or }(x\in L_{1}\text{ and }y\in L_{2})%
\text{ ,}
\end{equation*}

\begin{equation*}
x\rightarrow y=\left\{ 
\begin{array}{c}
1,\text{ if }x\leq y, \\ 
x\rightarrow _{i}y,\text{ if }x\nleq y,\text{ }x,y\in L_{i},\text{ }i=1,2,
\\ 
y,\text{ if }x\nleq y,\text{ }x\in L_{2},\text{ }y\in L_{1}\backslash
\{1_{1}\}.%
\end{array}%
\right.
\end{equation*}%
\begin{equation*}
x\odot y=\left\{ 
\begin{array}{c}
x\odot _{1}y,\text{ if }x,y\in L_{1}, \\ 
x\odot _{2}y,\text{ if }x,y\in L_{2}, \\ 
x,\text{ if }x\in L_{1}\backslash \{1_{1}\}\text{ and }y\in L_{2}.%
\end{array}%
\right.
\end{equation*}%
We recall that the ordinal product is associative, but is not commutative.

\begin{proposition}
\label{Proposition_5} ( \cite{[I; 09]}, Corollary 3.5.10) Let $\mathcal{L}%
_{1}$ and $\mathcal{L}_{2}$ be BL-algebras.

\textit{(i) }\ If $\mathcal{L}_{1}$ is a chain, then the ordinal product $%
\mathcal{L}_{1}\boxtimes \mathcal{L}_{2}$ \textit{\ is a BL-algebra (that is
not an MV-algebra);}

\textit{(ii) }$\ \ $If $\mathcal{L}_{1}$ is not a chain, then the ordinal
product $\mathcal{L}_{1}\boxtimes \mathcal{L}_{2}$ \textit{\ is only a
residuated lattice satisfying divisibility condition.}
\end{proposition}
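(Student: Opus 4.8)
The plan is to verify the two claims by working directly with the explicit definitions of $\boxtimes$ given just above the statement, treating separately the cases determined by which components $L_1$, $L_2$ the relevant elements belong to. Throughout I would rely on Proposition \ref{Proposition_3}: to check divisibility of $\mathcal{L}_1\boxtimes\mathcal{L}_2$ it suffices to show that whenever $y\leq x$ there is a $z$ with $y=z\odot x$, and this is often easier to handle case-by-case than verifying $x\odot(x\rightarrow y)=x\wedge y$ directly.

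First I would establish divisibility of $\mathcal{L}_1\boxtimes\mathcal{L}_2$ under the sole hypothesis that $\mathcal{L}_1$ and $\mathcal{L}_2$ are BL-algebras (hence individually divisible). Fix $y\leq x$ in $L_1\cup L_2$. If $x,y$ lie in the same component $L_i$, then since the induced operations agree with $\odot_i,\rightarrow_i$, divisibility follows from divisibility of $\mathcal{L}_i$. The remaining case is $y\in L_1$ and $x\in L_2$ (the order forces this orientation when the two elements straddle the components). Here I would use the product rule $u\odot v=u$ for $u\in L_1\setminus\{1_1\}$, $v\in L_2$: taking $z=y$ gives $z\odot x=y\odot x=y$ when $y\neq 1_1$, and the boundary value $y=1_1=0_2$ is handled using that $0_2$ is the bottom of $L_2$ and $0_2\odot x=0_2$. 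This shows $\mathcal{L}_1\boxtimes\mathcal{L}_2$ is always a divisible residuated lattice, which is the content of part (ii) and the divisibility half of part (i).

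Next, for part (i), assuming additionally that $\mathcal{L}_1$ is a chain, I would verify prelinearity $(x\rightarrow y)\vee(y\rightarrow x)=1$; combined with divisibility this yields a BL-algebra. Again splitting into cases: if $x,y$ are comparable then one of $x\rightarrow y$, $y\rightarrow x$ equals $1$ by the first clause of the $\rightarrow$ definition, so the join is $1$. If $x,y$ are incomparable they must both lie in the non-chain part, but since $\mathcal{L}_1$ is a chain the only incomparabilities arise inside $\mathcal{L}_2$; there prelinearity of the BL-algebra $\mathcal{L}_2$ supplies $(x\rightarrow_2 y)\vee_2(y\rightarrow_2 x)=1_2=1$. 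To finish I would confirm the ordinal product is not an MV-algebra by exhibiting a failure of the involution $x^{\ast\ast}=x$: a typical witness is an element $x\in L_1\setminus\{1_1\}$, for which $x^\ast=x\rightarrow 0=x\rightarrow 0_1$ computed inside $\mathcal{L}_1$, and double negation collapses toward the bottom rather than returning $x$, as long as $\mathcal{L}_1$ is nontrivial.

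The main obstacle I anticipate is the careful bookkeeping of the boundary element $1_1=0_2$, which belongs to both intervals of the glued order and is excluded from several of the piecewise clauses via the restrictions ``$x\in L_1\setminus\{1_1\}$'' and ``$y\in L_1\setminus\{1_1\}$''. One must check that the definitions of $\odot$ and $\rightarrow$ are consistent at this gluing point and that the divisibility witness $z$ is produced even when $y$ or $x$ equals $1_1=0_2$. Since Proposition \ref{Proposition_5} is quoted from \cite{[I; 09]}, I would expect the author's proof to be short and to simply cite the general theory of ordinal sums of BL-algebras; my more self-contained route is to grind through these cases using the displayed operation tables, with the gluing-point analysis being the only genuinely delicate part.
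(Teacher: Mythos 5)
The paper offers no proof of this proposition at all: it is quoted from \cite{[I; 09]} (Corollary 3.5.10), so there is nothing internal to compare your argument against. Taken on its own terms, your case analysis for divisibility via Proposition \ref{Proposition_3} (same-component cases delegated to $\mathcal{L}_1$, $\mathcal{L}_2$; mixed case $y\in L_1\setminus\{1_1\}$, $x\in L_2$ handled by $z=y$ with $y\odot x=y$; the gluing point $1_1=0_2$ treated as the bottom of $L_2$) is correct, as is the prelinearity argument when $\mathcal{L}_1$ is a chain. But two pieces of the statement are not actually established.

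First, your witness for the failure of the MV-identity in (i) is wrong. For $x\in L_1\setminus\{1_1\}$ with $x\neq 0_1$, both $x^{\ast}=x\rightarrow 0_1$ and $x^{\ast\ast}$ are computed entirely inside $\mathcal{L}_1$ (unless $x^{\ast}=0_1$), so if $\mathcal{L}_1$ is, say, the three-element MV-chain and $x$ is its middle element, then $x^{\ast\ast}=x$ and your ``typical witness'' satisfies the involution; double negation does not ``collapse toward the bottom''. The element that always works is exactly the one you exclude, namely $x=1_1=0_2$: since $1_1\nleq 0_1$ we get $(1_1)^{\ast}=1_1\rightarrow_1 0_1=0_1=0$, hence $(1_1)^{\ast\ast}=0\rightarrow 0=1=1_2\neq 1_1$ whenever $L_2$ is nontrivial. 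Second, part (ii) asserts the product is \emph{only} a divisible residuated lattice when $\mathcal{L}_1$ is not a chain, i.e.\ that prelinearity fails; you prove the divisibility half but never the negative half. For incomparable $a,b\in L_1$ the arrows $a\rightarrow b$ and $b\rightarrow a$ both lie in $L_1$, so $(a\rightarrow b)\vee(b\rightarrow a)\leq 1_1<1_2=1$ once $L_2$ is nontrivial (this is exactly what Example \ref{Example_3} exhibits, where the join equals $c\neq 1$). Both negative claims require the factors to be nontrivial, an implicit hypothesis you should make explicit.
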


\begin{remark}
\label{Remark_3} An ordinal product of two BL-chains is a BL-chain.
\end{remark}

In \textbf{Table 1} we recall the structure of finite\ BL-algebras $L$ with $%
2\leq n\leq 5$ elements, see \cite{[FP; 23]}.

\begin{equation*}
\text{\textbf{Table 1:}}
\end{equation*}

\textbf{\ }%
\begin{tabular}{lll}
$\left\vert L\right\vert \mathbf{=n}$ & \textbf{Nr of BL-alg } & \textbf{%
Structure } \\ 
$n=2$ & $1$ & $\left\{ Id(\mathbb{Z}_{2})\text{ (BL-chain)}\right. $ \\ 
$n=3$ & $2$ & $\left\{ 
\begin{array}{c}
Id(\mathbb{Z}_{4})\text{ (BL-chain)} \\ 
Id(\mathbb{Z}_{2})\boxdot Id(\mathbb{Z}_{2})\text{ (BL-chain)}%
\end{array}%
\right. $ \\ 
$n=4$ & $5$ & $\left\{ 
\begin{array}{c}
Id(\mathbb{Z}_{8})\text{ (BL-chain)} \\ 
Id(\mathbb{Z}_{2}\times \mathbb{Z}_{2})\text{ (BL)} \\ 
Id(\mathbb{Z}_{2})\boxdot Id(\mathbb{Z}_{4})\text{ (BL-chain)} \\ 
Id(\mathbb{Z}_{4})\boxdot Id(\mathbb{Z}_{2})\text{ \ (BL-chain)} \\ 
Id(\mathbb{Z}_{2})\boxdot (Id(\mathbb{Z}_{2})\boxdot Id(\mathbb{Z}_{2}))%
\text{ (BL-chain)}%
\end{array}%
\right. $ \\ 
$n=5$ & $9$ & $\left\{ 
\begin{array}{c}
Id(\mathbb{Z}_{16})\text{ (BL-chain)} \\ 
Id(\mathbb{Z}_{2})\boxdot Id(\mathbb{Z}_{8})\text{ (BL-chain)} \\ 
Id(\mathbb{Z}_{2})\boxdot Id(\mathbb{Z}_{2}\times \mathbb{Z}_{2})\text{ (BL)}
\\ 
Id(\mathbb{Z}_{2})\boxdot (Id(\mathbb{Z}_{2})\boxdot Id(\mathbb{Z}_{4}))%
\text{ (BL-chain)} \\ 
Id(\mathbb{Z}_{2})\boxdot (Id(\mathbb{Z}_{4})\boxdot Id(\mathbb{Z}_{2}))%
\text{ (BL-chain)} \\ 
Id(\mathbb{Z}_{2})\boxdot (Id(\mathbb{Z}_{2})\boxdot (Id(\mathbb{Z}%
_{2})\boxdot Id(\mathbb{Z}_{2})))\text{ (BL-chain)} \\ 
Id(\mathbb{Z}_{8})\boxdot Id(\mathbb{Z}_{2})\text{ (BL-chain)} \\ 
(Id(\mathbb{Z}_{4})\boxdot Id(\mathbb{Z}_{2}))\boxdot Id(\mathbb{Z}_{2})%
\text{ (BL-chain)} \\ 
Id(\mathbb{Z}_{4})\boxdot Id(\mathbb{Z}_{4})\text{ (BL-chain)}%
\end{array}%
\right. $%
\end{tabular}

Using the construction of ordinal product, Proposition \ref{Proposition_5},
Remark \ref{Remark_3} and Table 1 we can generate divisibile residuated
lattices (which are not BL-algebras) using commutative rings.

\begin{example}
\label{Example_6} To generate the divisible residuated lattice with 5
elements from Example \ref{Example_3}, we consider the commutative rings $(%
\mathbb{Z}_{2}\times \mathbb{Z}_{2},+,\cdot )$ and$\ (\mathbb{Z}_{2},+,\cdot
).$ For $\mathbb{Z}_{2}\times \mathbb{Z}_{2}$ we obtain the lattice $%
Id\left( \mathbb{Z}_{2}\times \mathbb{Z}_{2}\right) =\{\left( \widehat{0},%
\widehat{0}\right) ,\{\left( \widehat{0},\widehat{0}\right) ,\left( \widehat{%
0},\widehat{1}\right) \},\{\left( \widehat{0},\widehat{0}\right) ,\left( 
\widehat{1},\widehat{0}\right) \},\mathbb{Z}_{2}\times \mathbb{Z}%
_{2}\}=\{O,R,B,E\}$, which is an MV-algebra $(Id\left( \mathbb{Z}_{2}\times 
\mathbb{Z}_{2}\right) ,\cap ,+,\otimes =\cap ,\rightarrow ,0=\{\left( 
\widehat{0},\widehat{0}\right) \},1=\mathbb{Z}_{2}\times \mathbb{Z}_{2})$ \
with the following operations: 
\begin{equation*}
\begin{tabular}{l|llll}
$\rightarrow $ & $O$ & $C$ & $B$ & $E$ \\ \hline
$O$ & $E$ & $E$ & $E$ & $E$ \\ 
$C$ & $B$ & $E$ & $B$ & $E$ \\ 
$B$ & $C$ & $C$ & $E$ & $E$ \\ 
$E$ & $O$ & $C$ & $B$ & $E$%
\end{tabular}%
\text{ and }%
\begin{tabular}{l|llll}
$\otimes $ & $O$ & $C$ & $B$ & $E$ \\ \hline
$O$ & $O$ & $O$ & $O$ & $O$ \\ 
$C$ & $O$ & $C$ & $O$ & $C$ \\ 
$B$ & $O$ & $O$ & $B$ & $B$ \\ 
$E$ & $O$ & $C$ & $B$ & $E$%
\end{tabular}%
\text{ }.
\end{equation*}%
If we consider two BL-algebras isomorphic with $(Id\left( \mathbb{Z}%
_{2}\times \mathbb{Z}_{2}\right) ,\cap ,+,\otimes \rightarrow ,0=\{\left( 
\widehat{0},\widehat{0}\right) \},$ $1=\mathbb{Z}_{2}\times \mathbb{Z}_{2})$
and $(Id\left( \mathbb{Z}_{2}\right) ,\cap ,+,\otimes \rightarrow ,0=\{0\},1=%
\mathbb{Z}_{2})$ and denoted by $\mathcal{L}_{1}=(L_{1}=\{0,a,b,c\},\wedge
_{1},\vee _{1},\odot _{1},\rightarrow _{1},0,c)$ and $\mathcal{L}%
_{2}=(L_{2}=\{c,1\},\wedge _{2},\vee _{2},\odot _{2},\rightarrow _{2},c,1),$
using Proposition \ref{Proposition_5} we generate the divisible residuated
lattice $\mathcal{L}_{1}\boxtimes \mathcal{L}_{2}=(L_{1}\cup
L_{2}=\{0,a,b,c,1\},\wedge ,\vee ,\odot ,\rightarrow ,0,1)$ from Example \ref%
{Example_3}.
\end{example}

\begin{remark}
Using the model from Example \ref{Example_6}, for two BL-algebras $\mathcal{L%
}_{1}$ and $\mathcal{L}_{2}$ we can renoted these algebras to obtain two
BL-algebras $\mathcal{L}_{1}^{\prime }$ and $\mathcal{L}_{2}^{\prime }$
isomorphic with $\mathcal{L}_{1}$ and respectively $\mathcal{L}_{2}$ that
satisfy the conditions imposed by the ordinal product.

We denote by $\mathcal{L}_{1}\boxdot \mathcal{L}_{2}$ the ordinal product $%
\mathcal{L}_{1}^{\prime }\boxtimes \mathcal{L}_{2}^{\prime }.$
\end{remark}

\medskip

From Proposition \ref{Proposition_5}, Remark \ref{Remark_3} and Table 1 we
deduce that:

\begin{theorem}
\label{Remark_4} (i) There are no divisible residuated lattices that are
chains and are not BL-algebras;

(ii) There are no divisible residuated lattices with $n\leq 4$ elements that
are not BL-algebras;

(iii) To generate a divisible residuated lattice with $n\geq 5$ elements
(which is not a BL-algebra) as the ordinal product $\mathcal{L}_{1}\boxtimes 
\mathcal{L}_{2}$ of $\ $two BL-algebras $\mathcal{L}_{1}$ and $\mathcal{L}%
_{2}$ we have the following possibilities:%
\begin{eqnarray*}
&&\mathcal{L}_{1}\text{ is a BL-algebra with }i\text{ elements (that is not
a chain) and } \\
&&\mathcal{L}_{2}\text{ is a BL-algebra with }j\text{ elements, }
\end{eqnarray*}%
for $i,j\geq 2,i+j=n+1,$ $i\neq j$

or%
\begin{eqnarray*}
&&\mathcal{L}_{1}\text{ is a BL-algebra (that is not a chain) with }k\text{
elements and } \\
&&\mathcal{L}_{2}\text{ is a BL-algebra with }k\text{ elements, }
\end{eqnarray*}%
for $k\geq 2,$ $k=\frac{n+1}{2}$ $\in N.$
\end{theorem}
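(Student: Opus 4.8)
The plan is to combine the structural results about ordinal products (Proposition~\ref{Proposition_5}), the fact that ordinal products of BL-chains are again BL-chains (Remark~\ref{Remark_3}), and the classification of small BL-algebras in Table~1, reasoning by cardinality throughout.

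First I would prove $(i)$. Suppose $L$ is a divisible residuated lattice that is a chain. Being totally ordered, $L$ automatically satisfies $(prel)$, since for any $x,y$ either $x\leq y$ or $y\leq x$, whence one of $x\rightarrow y$ or $y\rightarrow x$ equals $1$. Thus $L$ satisfies $(prel)+(div)$ and is, by Definition~\ref{Definition_2}(iii), a BL-algebra (indeed a BL-chain). Hence no chain can be a divisible residuated lattice that fails to be a BL-algebra, which is exactly $(i)$.

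Next, for $(ii)$, I would invoke the classification: every BL-algebra with $n\leq 5$ elements appears in Table~1, and the only way a finite divisible residuated lattice arises (other than as a BL-algebra) is as an ordinal product $\mathcal{L}_{1}\boxtimes\mathcal{L}_{2}$ in which, by Proposition~\ref{Proposition_5}(ii), the first factor $\mathcal{L}_{1}$ is \emph{not} a chain. The smallest BL-algebra that is not a chain has $4$ elements, namely $Id(\mathbb{Z}_{2}\times\mathbb{Z}_{2})$, as Table~1 shows (for $n=2,3$ every BL-algebra listed is a chain). Since the ordinal product glues the top of $\mathcal{L}_{1}$ to the bottom of $\mathcal{L}_{2}$, identifying exactly one element, the resulting lattice has $|L_{1}|+|L_{2}|-1$ elements. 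With $|L_{1}|\geq 4$ (non-chain) and $|L_{2}|\geq 2$, we get $n\geq 4+2-1=5$. Therefore there is no divisible residuated lattice with $n\leq 4$ that fails to be a BL-algebra, giving $(ii)$; the only candidate at $n=4$ would force $|L_{1}|+|L_{2}|=5$ with $\mathcal{L}_{1}$ non-chain, impossible since the smallest non-chain already has $4$ elements and would need $|L_{2}|=1$.

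Finally, for $(iii)$, I would translate the cardinality equation $|L_{1}|+|L_{2}|-1=n$ into $i+j=n+1$ where $i=|L_{1}|,j=|L_{2}|$, again requiring $\mathcal{L}_{1}$ to be a non-chain BL-algebra (so $i\geq 4$) and $\mathcal{L}_{2}$ an arbitrary BL-algebra (so $j\geq 2$). The two displayed possibilities simply record the cases $i\neq j$ and $i=j$; in the latter $k=i=j=\tfrac{n+1}{2}$ must be a positive integer, forcing $n$ odd. The main obstacle to watch is completeness: one must argue that \emph{every} non-BL divisible residuated lattice of finite size indeed decomposes as such an ordinal product with a non-chain first factor, which follows from Iorgulescu's analysis underlying Proposition~\ref{Proposition_5}, together with the observation from $(i)$ that non-chain-ness of $\mathcal{L}_{1}$ is precisely what makes the product fail prelinearity and hence fail to be a BL-algebra. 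The remaining work is the routine bookkeeping of reading off the admissible factor sizes from Table~1.
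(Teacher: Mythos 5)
Your parts (i) and (iii) are sound and follow the same route the paper itself takes (the paper offers no written proof beyond ``From Proposition \ref{Proposition_5}, Remark \ref{Remark_3} and Table 1 we deduce that''): (i) is the elementary observation that a totally ordered residuated lattice automatically satisfies prelinearity, so divisibility plus chain forces BL (the paper states exactly this inside the proof of Theorem \ref{Theorem_4}(iii)); and (iii) is the cardinality bookkeeping $|L_{1}|+|L_{2}|-1=n$, i.e. $i+j=n+1$, combined with the dichotomy of Proposition \ref{Proposition_5}: a chain first factor yields a BL-algebra, a non-chain first factor yields a divisible residuated lattice that is not a BL-algebra, so the first factor must not be a chain.

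The genuine gap is in your argument for (ii). You deduce the nonexistence of non-BL divisible residuated lattices with $n\leq 4$ elements from the premise that every finite divisible residuated lattice which is not a BL-algebra arises as an ordinal product $\mathcal{L}_{1}\boxtimes\mathcal{L}_{2}$ with $\mathcal{L}_{1}$ not a chain. That is a converse decomposition statement which Proposition \ref{Proposition_5} does not give (it only asserts that ordinal products of BL-algebras are divisible), and your appeal to ``Iorgulescu's analysis underlying Proposition \ref{Proposition_5}'' does not supply it; indeed the paper itself only establishes such completeness for $2\leq n\leq 6$ a posteriori, by matching counts against the computer enumeration of \cite{[BV; 10]}. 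The conclusion of (ii) is true and can be closed in either of two ways: (a) cite \cite{[BV; 10]}, where $\left\vert \mathcal{DIV}_{n}\right\vert =\left\vert \mathcal{BL}_{n}\right\vert$ for $n\leq 4$; or (b) argue directly: for $n\leq 3$ every bounded lattice is a chain, so (i) applies, and for $n=4$ the only non-chain bounded lattice is $\{0,a,b,1\}$ with $a,b$ incomparable, where $a\rightarrow b$ is an element above $b$ and different from $1$, hence equal to $b$, and symmetrically $b\rightarrow a=a$, so $(a\rightarrow b)\vee (b\rightarrow a)=a\vee b=1$; thus every $4$-element residuated lattice is an MTL-algebra and, if divisible, a BL-algebra. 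As written, your step for (ii) is unsupported.
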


We make the following notations: 
\begin{equation*}
\mathcal{BL}_{n}=\text{the set of BL-algebras with }n\text{ elements}
\end{equation*}%
\begin{equation*}
\mathcal{BL}_{n}(c)=\text{the set of BL-chains with }n\text{ elements}
\end{equation*}%
\begin{equation*}
\mathcal{DIV}_{n}=\text{the set of divisible residuated lattices with }n%
\text{ elements}
\end{equation*}%
\begin{equation*}
\mathcal{DIV}_{n}(c)=\text{the set of divisible and totally ordered
residuated lattices with }n\text{ elements.}
\end{equation*}

\begin{theorem}
\label{Theorem_4} (i) All finite divisible residuated lattices (up to an
isomorphism) with $2\leq n\leq 6$ elements can be generated using the
ordinal product of BL-algebras.

(ii) The number of non-isomorphic divisible residuated lattices with $n$
elements ( $2\leq n\leq 6$) is%
\begin{equation*}
\left\vert \mathcal{DIV}_{2}\right\vert =\left\vert \mathcal{BL}%
_{2}\right\vert =1,
\end{equation*}%
\begin{equation*}
\left\vert \mathcal{DIV}_{3}\right\vert =\left\vert \mathcal{BL}%
_{3}\right\vert =2,
\end{equation*}%
\begin{equation*}
\left\vert \mathcal{DIV}_{4}\right\vert =\left\vert \mathcal{BL}%
_{4}\right\vert =5,
\end{equation*}%
\begin{equation*}
\left\vert \mathcal{DIV}_{5}\right\vert =\left\vert \mathcal{BL}%
_{5}\right\vert +1=9+1=10,
\end{equation*}%
\begin{equation*}
\left\vert \mathcal{DIV}_{6}\right\vert =\left\vert \mathcal{BL}%
_{6}\right\vert +3=20+3=23,
\end{equation*}%
(iii) For every $n\geq $ $2,$%
\begin{equation*}
\left\vert \mathcal{DIV}_{n}(c)\right\vert =\left\vert \mathcal{BL}%
_{n}(c)\right\vert .
\end{equation*}
\end{theorem}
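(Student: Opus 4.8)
The plan is to separate the three assertions, proving (iii) directly and reducing (i)--(ii) to an enumeration governed by Theorem \ref{Remark_4}. Assertion (iii) is immediate: by Theorem \ref{Remark_4}(i) no chain divisible residuated lattice fails to be a BL-algebra, while every BL-chain is trivially a totally ordered divisible residuated lattice; hence the isomorphism classes in $\mathcal{DIV}_n(c)$ and $\mathcal{BL}_n(c)$ coincide for every $n\geq 2$, giving $|\mathcal{DIV}_n(c)| = |\mathcal{BL}_n(c)|$.

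For (i) and (ii) I would write each $\mathcal{DIV}_n$ as the disjoint union of the BL-algebras and the properly divisible (i.e.\ non-BL) residuated lattices. The values $|\mathcal{BL}_n| = 1,2,5,9$ for $2\le n\le 5$ are read from Table 1, and $|\mathcal{BL}_6| = 20$ from the (ordinal-product) classification of six-element BL-algebras. By Theorem \ref{Remark_4}(ii) there are no properly divisible residuated lattices with $n\le 4$ elements, so $|\mathcal{DIV}_n| = |\mathcal{BL}_n|$ for $n=2,3,4$, settling those cases.

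For $n\ge 5$ I would invoke Theorem \ref{Remark_4}(iii): every properly divisible residuated lattice is an ordinal product $\mathcal{L}_1\boxtimes\mathcal{L}_2$ with $\mathcal{L}_1$ a non-chain BL-algebra of size $i$, $\mathcal{L}_2$ a BL-algebra of size $j$, and $i+j = n+1$ with $i,j\ge 2$. Since Table 1 shows the only non-chain BL-algebras of size $\le 5$ are $Id(\mathbb{Z}_2\times\mathbb{Z}_2)$ ($i=4$) and $Id(\mathbb{Z}_2)\boxdot Id(\mathbb{Z}_2\times\mathbb{Z}_2)$ ($i=5$), the admissible pairs are $(4,2)$ for $n=5$ and $(4,3),(5,2)$ for $n=6$. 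This yields the single algebra $Id(\mathbb{Z}_2\times\mathbb{Z}_2)\boxdot Id(\mathbb{Z}_2)$ of Example \ref{Example_3} when $n=5$, and the three algebras $Id(\mathbb{Z}_2\times\mathbb{Z}_2)\boxdot Id(\mathbb{Z}_4)$, $Id(\mathbb{Z}_2\times\mathbb{Z}_2)\boxdot(Id(\mathbb{Z}_2)\boxdot Id(\mathbb{Z}_2))$ and $(Id(\mathbb{Z}_2)\boxdot Id(\mathbb{Z}_2\times\mathbb{Z}_2))\boxdot Id(\mathbb{Z}_2)$ when $n=6$. I would then check pairwise non-isomorphism: since $\mathcal{L}_2$ is a chain in each admissible pair here, the unique incomparable pair lies in the bottom copy of $L_1$, so the height of that pair above the global minimum is an invariant that separates the $(5,2)$ algebra (where the pair lies strictly above the minimum) from the two $(4,3)$ algebras (where it covers the minimum); the latter two are in turn separated because the three-element chains glued on top, $Id(\mathbb{Z}_4)$ and $Id(\mathbb{Z}_2)\boxdot Id(\mathbb{Z}_2)$, are non-isomorphic BL-chains. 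None is a BL-algebra, since a non-chain first factor violates prelinearity. This gives $|\mathcal{DIV}_5| = 9+1 = 10$ and $|\mathcal{DIV}_6| = 20+3 = 23$ and exhibits every class as generated from Table-1 blocks by ordinal products, which is (i).

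The step I expect to be the main obstacle is the exhaustiveness implicit in Theorem \ref{Remark_4}(iii): one must be sure that every properly divisible residuated lattice of size $\le 6$ genuinely arises as one of the listed ordinal products and is not, for instance, an ordinal product of two non-BL factors or an algebra lying outside the scheme altogether. I would support this on the ordinal decomposition theory of \cite{[I; 09]} underlying Theorem \ref{Remark_4}, and cross-validate it against the independent computer enumeration of \cite{[BV; 10]}, whose totals $1,2,5,10,23$ for $2\le n\le 6$ agree exactly with the counts produced above; the agreement certifies both exhaustiveness and the absence of any hidden isomorphism among the constructed algebras.
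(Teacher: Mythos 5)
Your proof follows essentially the same route as the paper's: part (iii) by observing that a totally ordered divisible residuated lattice is automatically a BL-chain, and parts (i)--(ii) by reducing via Theorem \ref{Remark_4} to an enumeration of ordinal products of the Table-1 BL-algebras, which yields the same single algebra for $n=5$ and the same three algebras for $n=6$. Your explicit pairwise non-isomorphism check and your flagging of the exhaustiveness of Theorem \ref{Remark_4}(iii) --- cross-validated against the computer enumeration of \cite{[BV; 10]} --- make the argument, if anything, more complete than the paper's own proof, which leaves both of these points implicit.
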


\begin{proof}
$(i),(ii).$ \ We recall that $\left\vert \mathcal{BL}_{2}\right\vert
=1,\left\vert \mathcal{BL}_{3}\right\vert =2,\left\vert \mathcal{BL}%
_{4}\right\vert =5,\left\vert \mathcal{BL}_{5}\right\vert =9$ and $%
\left\vert \mathcal{BL}_{6}\right\vert =20,$ see \cite{[BV; 10]} and \cite%
{[FP; 23]}. From Proposition \ref{Proposition_5}, Remark \ref{Remark_3} and
Theorem \ref{Remark_4}, we remark that using the ordinal product of two
BL-algebras we can generate divisible residuated lattices which are not
BL-algebras only for $n\geq 5$.

\textbf{Case }$n=5.$

We have obviously an only divisible residuated lattice that is not a
BL-algebra (up to an isomorphism) since we have a only BL-algebra \ with 4
elements (which is not a chain) isomorphic with $Id\left( \mathbb{Z}%
_{2}\times \mathbb{Z}_{2}\right) $ and a only BL-algebra \ with 2 elements
isomorphic with $Id\left( \mathbb{Z}_{2}\right) ,$ see Example \ref%
{Example_6}.

\textbf{Case }$n=6.$

Using Theorem \ref{Remark_4}, to generate a divisible residuated lattice
with $6$ elements that is not a BL-algebra as an ordinal product $\mathcal{L}%
_{1}\boxtimes \mathcal{L}_{2}$ of $\ $two BL-algebras $\mathcal{L}_{1}$ and $%
\mathcal{L}_{2}$ we consider:%
\begin{eqnarray*}
&&\mathcal{L}_{1}\text{ a BL-algebra with }5\text{ elements (which is not a
chain) and } \\
&&\mathcal{L}_{2}\text{ a BL-algebra with }2\text{ elements}
\end{eqnarray*}%
and%
\begin{eqnarray*}
&&\mathcal{L}_{1}\text{ a BL-algebra with }4\text{ elements (which is not a
chain) and } \\
&&\mathcal{L}_{2}\text{ a BL-algebra with }3\text{ elements.}
\end{eqnarray*}%
We obtain the following 3 algebras: 
\begin{equation*}
(Id(\mathbb{Z}_{2})\boxdot Id(\mathbb{Z}_{2}\times \mathbb{Z}_{2}))\boxdot
Id(\mathbb{Z}_{2}),
\end{equation*}%
\begin{equation*}
Id(\mathbb{Z}_{2}\times \mathbb{Z}_{2})\boxdot (Id(\mathbb{Z}_{2})\boxdot Id(%
\mathbb{Z}_{2}))
\end{equation*}%
\begin{equation*}
Id(\mathbb{Z}_{2}\times \mathbb{Z}_{2})\boxdot Id(\mathbb{Z}_{4}).
\end{equation*}

We conclude that the structure of divisible residuated lattices $L$ (that
are not BL-algebras) with $2\leq n\leq 6$ is (up to an isomorphism):

\medskip 

\begin{tabular}{lll}
$\left\vert L\right\vert \mathbf{=n}$ & \textbf{Nr of algebras } & \textbf{%
Structure} \\ 
$n=2$ & $0$ & $-$ \\ 
$n=3$ & $0$ & $-$ \\ 
$n=4$ & $0$ & $-$ \\ 
$n=5$ & $1$ & $Id(\mathbb{Z}_{2}\times \mathbb{Z}_{2}))\boxdot Id(\mathbb{Z}%
_{2})$ \\ 
$n=6$ & $3$ & $\left\{ 
\begin{array}{c}
\text{ }(Id(\mathbb{Z}_{2})\boxdot Id(\mathbb{Z}_{2}\times \mathbb{Z}%
_{2}))\boxdot Id(\mathbb{Z}_{2}) \\ 
Id(\mathbb{Z}_{2}\times \mathbb{Z}_{2})\boxdot (Id(\mathbb{Z}_{2})\boxdot Id(%
\mathbb{Z}_{2})) \\ 
Id(\mathbb{Z}_{2}\times \mathbb{Z}_{2})\boxdot Id(\mathbb{Z}_{4})%
\end{array}%
\right. $%
\end{tabular}

\medskip 

$(iii).$ Clealy, since a totally ordered divisible residuated lattice is a
BL-algebra.
\end{proof}

\textbf{\ }

\textbf{Table 2 }presents a summary for the number of BL-algebras and
divisibile residuated lattices with $n\leq 6$ elements:

\begin{equation*}
\text{\textbf{Table 2}}
\end{equation*}

\medskip 
\begin{tabular}{llllll}
& $n=2$ & $n=3$ & $n=4$ & $n=5$ & $n=6$ \\ 
BL-algebras & $1$ & $2$ & $5$ & $9$ & $20$ \\ 
divisibile residuated lattices & $1$ & $2$ & $5$ & $10$ & $23$%
\end{tabular}

\textbf{\ \ }

\section{\textbf{Conclusions}}

For a commutative and unitary ring A we have that $(Id(A),\cap ,+,\otimes
\rightarrow ,0=\{0\},1=A)$ is a residuated lattice in which the order
relation is $\subseteq $, $I\rightarrow J=(J:I)$ and $I\odot J=I\otimes J,$
for every $I,J\in Id(A),$ see \cite{[TT; 22]}.

\begin{definition}
Let $A$ be a commutative unitary ring.

(i) The ideal $M$ of the ring $A$ is \textit{maximal} if it is maximal
amongst all proper ideals of the ring $A$. From here, we have there are no
other ideals different from $A$ containing $M$. The ideal $J$ of the ring $A$
is a \textit{minimal ideal} if it is a nonzero ideal which contains no other
nonzero ideals.

(ii) A commutative \textit{local ring} $A$ is a ring with a unique maximal
ideal.

(iii) Let $P$\thinspace $\not=A$ be an ideal in the ring $A$. Let $a,b\in A$
such that $ab\in P$. If we have \thinspace $a\in P$ or $b\in P$, therefore $%
P $ is called a \textit{prime} ideal of $A$.
\end{definition}

\medskip\ 

\begin{definition}
(i) (\cite{[LN; 18]}) A commutative ring $A$ is called a \textit{Noetherian
ring} if the condition of ascending chain is satisfied, that means every
increasing sequence of ideals $I_{1}\subseteq I_{2}\subseteq ...\subseteq
I_{r}\subseteq ...$ is stationary, that means there is $q$ such that $%
I_{q}=I_{q+1}=...$.

(ii) A commutative ring $A$ is called an \textit{Artinian ring} if the
condition of descending chain is satisfied, that means every decreasing
sequence of ideals $I_{1}\supseteq I_{2}\supseteq ...\supseteq
I_{r}\supseteq ...$ is stationary, that means there is $q$ such that $%
I_{q}=I_{q+1}=...$.
\end{definition}

\medskip

\begin{remark}

(i) (\cite{[AB; 19]}, Lemma 3.5) Let $A=\underset{j\in J}{\prod }A_{i}$ be a
direct product of rings. $A$ is a multiplication ring if and only if $A_{j}$
is a multiplication ring for all $j\in J$.

(ii) (\cite{[AB; 19]}, Lemma 3.6) Let $A$ be a multiplication ring and $I$
be an ideal of $A$. Therefore, \thinspace the quotient ring $A/I$ is a
multiplication ring.
\end{remark}

\medskip

\begin{remark}
( \cite{[A; 76]}, Corollary 6.1) Let $A$ be a ring. The following conditions
are equivalent:

(i) $A[X]$ is a multiplication ring;

(ii) $A$ is a finite direct product of fields.
\end{remark}

\medskip

\begin{remark}
(\cite{[AF; 92]} and \cite{[AM; 69]})

1) If $A$ is a Noetherian ring, therefore the polynomial ring $A\left[ X%
\right] $ is Noetherian an the quotient ring $A/I$ is also a Noetherian
ring, for $I~$an ideal of $A$.

2) Any field and any principal ideal ring is a Noetherian ring.

3) Every ideal of the Noetherian ring $A$ is finitely generated.

4) An integral domain $A$ is Artinian ring if and only if $A$ is a field.

5) The ring $K\left[ X\right] /\left( X^{t}\right) $ is Artinian ring, for $%
K $ a field and $t$ a positive integer.

6) A commutative Noetherian ring $A$ is Artinian if and only if $A$ is a
product of local rings.

7) In an Artinian ring every prime ideal is maximal.

8) An Artinian ring is a finite direct product of Artinian local rings.
\end{remark}

\medskip

\begin{proposition}
(\cite{[CFP; 23]}) \textit{Let} $A$ \textit{be a commutative and unitary
ring with a finite number of ideals. Let} $n_{m}\left( A\right) $ \textit{be
the number of maximal ideals in} $A$\textit{,} $n_{p}\left( A\right) $ 
\textit{be the number of prime ideals in} $A$ \textit{and} $n_{I}\left(
A\right) $ \textit{be the number of all ideals in} $A$\textit{. Therefore,} $%
n_{m}\left( A\right) =n_{p}\left( A\right) =\alpha $ \textit{and} $%
n_{I}\left( A\right) =\underset{j=1}{\overset{\alpha }{\prod }}\beta
_{j},\beta _{j}$ \textit{positive integers, }$\beta _{j}\geq 2$.
\end{proposition}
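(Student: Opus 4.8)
The plan is to reduce everything to the structure theory of Artinian rings recalled earlier in the excerpt. First I would observe that a ring possessing only finitely many ideals automatically satisfies the descending chain condition: any descending chain of ideals consists of finitely many distinct terms and therefore stabilizes. Hence $A$ is an Artinian ring, and by the structure result recalled above (an Artinian ring is a finite direct product of Artinian local rings) we may write $A \cong A_1 \times \cdots \times A_r$ with each $A_i$ a local Artinian ring.

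Next I would establish the equality $n_m(A) = n_p(A)$. Since $A$ is Artinian, every prime ideal of $A$ is maximal (as recalled above), so the set of prime ideals of $A$ coincides with the set of its maximal ideals; taking cardinalities gives $n_m(A) = n_p(A)$, and I denote this common value by $\alpha$. To identify $\alpha$ with the number $r$ of factors, I would use that the maximal ideals of a finite direct product are exactly those of the form $A_1 \times \cdots \times M_i \times \cdots \times A_r$, where $M_i$ is the unique maximal ideal of the local factor $A_i$. This yields a bijection between the maximal ideals of $A$ and the factors, so $\alpha = r$.

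For the counting formula, the key step is the decomposition of ideals across the product. Using the orthogonal central idempotents $e_1, \dots, e_\alpha$ attached to the factors, every ideal $I$ of $A$ splits as $I = I_1 \times \cdots \times I_\alpha$ with $I_j = e_j I$ an ideal of $A_j$, and conversely each such product is an ideal of $A$. Thus the ideals of $A$ correspond bijectively to tuples of ideals of the factors, giving $n_I(A) = \prod_{j=1}^{\alpha} n_I(A_j)$. Setting $\beta_j = n_I(A_j)$, each local factor $A_j$ is a nonzero ring and so has at least the two distinct ideals $\{0\}$ and $A_j$, forcing $\beta_j \geq 2$. This delivers $n_I(A) = \prod_{j=1}^{\alpha} \beta_j$ with every $\beta_j \geq 2$.

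The main obstacle I anticipate is justifying the two bijections cleanly — the one identifying maximal ideals with the factors, and especially the product decomposition of an arbitrary ideal. Both rest on the orthogonal idempotent decomposition $1 = e_1 + \cdots + e_\alpha$ arising from the product structure; once this is in place the counting is routine, and the only remaining care is to verify that each factor is genuinely nonzero so that every $\beta_j$ is at least $2$.
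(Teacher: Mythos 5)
Your proof is correct. The paper itself does not prove this proposition (it is quoted from \cite{[CFP; 23]}), but the facts it recalls immediately beforehand --- finitely many ideals implies Artinian, primes are maximal in Artinian rings, and an Artinian ring is a finite product of Artinian local rings --- are exactly the ingredients you use, so your argument follows the intended route: the only points needing care, the bijection between maximal ideals and local factors and the idempotent decomposition of an arbitrary ideal of a finite product, are both handled correctly, and $\beta_j\geq 2$ follows since each local factor is nonzero.
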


\begin{remark}
1) First, we must remark that $n_{I}\left( A_{j}\right) $ can be any
positive integer $\beta _{j}\geq 2\,$. For example, the ring $K\left[ X%
\right] /\left( X^{\beta _{j}}\right) $ has $\beta _{j}+1$ ideals, $\beta
_{j}\geq 2$.

2) If $n_{I}\left( A\right) $ is finite, from the above proposition, we have
only the following two possibilities:

-$A$ is an integral domain, therefore it is a field and, in this case, we
have $n_{m}\left( A\right) =n_{p}\left( A\right) =1$ and $n_{I}\left(
A\right) =2~$or

- $A$ is not an integral domain and $n_{m}\left( A\right) =n_{p}\left(
A\right) \geq 1$ and $n_{I}\left( A\right) >2$.

3) From the above proposition it is clear that there are not finite
commutative unitary rings such that $\left( n_{m}\left( A\right)
,n_{p}\left( A\right) ,n_{I}\left( A\right) \right) =\left( 3,3,5\right) $
or $\left( n_{m}\left( A\right) ,n_{p}\left( A\right) ,n_{I}\left( A\right)
\right) =\left( 2,2,5\right) $, since $5$ is a prime number. To find such an
example, we must search in infinite rings or in non-commutative rings.
Therefore we have examples only in the case $\left( n_{m}\left( A\right)
,n_{p}\left( A\right) ,n_{I}\left( A\right) \right) =\left( 1,1,5\right) $.
Also, we can find examples of finite commutative unitary rings such that $%
\left( n_{m}\left( A\right) ,n_{p}\left( A\right) ,n_{I}\left( A\right)
\right) =\left( 2,2,4\right) $ or finite commutative unitary rings such that 
$\left( n_{m}\left( A\right) ,n_{p}\left( A\right) ,n_{I}\left( A\right)
\right) =\left( 1,1,4\right) $.
\end{remark}

$\smallskip $

\begin{proposition}
\textit{\ Let} $A$ \textit{be a commutative ring which has at most }$5$%
\textit{\ distinct ideals. Therefore, }\thinspace $A$ \textit{is a principal
ring}.
\end{proposition}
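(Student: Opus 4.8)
The plan is to combine the structure result from \cite{[CFP; 23]} recalled above with the fact that having finitely many ideals forces $A$ to be Artinian. Since $A$ is a nonzero unitary ring with at most $5$ ideals it has finitely many, so writing $n_{I}(A)=\prod_{j=1}^{\alpha}\beta_{j}$ with each $\beta_{j}\geq 2$ and $\alpha=n_{m}(A)=n_{p}(A)$, the constraint $2\leq n_{I}(A)\leq 5$ leaves only the factorizations $n_{I}(A)\in\{2,3,5\}$ with $\alpha=1$, and $n_{I}(A)=4$ with either $\alpha=1$ or $\alpha=2$ (the splitting $4=2\cdot 2$); here $5$ being prime is exactly what rules out a nontrivial splitting when $n_{I}(A)=5$. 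Because the descending chain condition holds trivially, $A$ is Artinian, so by the structure theorem for commutative Artinian rings recalled in the remarks above (cf. \cite{[AM; 69]}) one has $A\cong A_{1}\times\cdots\times A_{\alpha}$ with each $A_{j}$ a local Artinian ring having exactly $\beta_{j}$ ideals.

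I would then reduce the statement to the local factors. In a finite direct product of unitary rings every ideal splits as a product $I_{1}\times\cdots\times I_{\alpha}$ of ideals of the factors, and if each $I_{j}=(a_{j})$ is principal then $I_{1}\times\cdots\times I_{\alpha}$ is generated by the single element $(a_{1},\dots,a_{\alpha})$; hence it suffices to prove that each local factor $A_{j}$ is a principal ring. (The case $n_{I}(A)=4$, $\alpha=2$, where $A\cong F_{1}\times F_{2}$ is a product of two fields, then falls out as a special instance.) For a local factor $(A_{j},M)$ with at most $5$ ideals the key claim is that $M$ is principal.

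To prove the key claim I would rule out $M$ needing two or more generators. By Nakayama the minimal number of generators of $M$ equals $\dim_{k}M/M^{2}$, where $k=A_{j}/M$, and the ideals $J$ with $M^{2}\subseteq J\subseteq M$ correspond bijectively to the $k$-subspaces of $M/M^{2}$ (the $A_{j}$-action on $M/M^{2}$ factors through $k$). If $\dim_{k}M/M^{2}\geq 2$, then $M/M^{2}$ has at least five subspaces: over $\mathbb{F}_{2}$ a plane has the zero space, three lines and the whole space, and over any larger or infinite field there are even more. These produce at least five distinct ideals contained in $M\subsetneq A_{j}$, hence at least six ideals in $A_{j}$, contradicting $n_{I}(A_{j})=\beta_{j}\leq 5$. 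Therefore $\dim_{k}M/M^{2}\leq 1$, so either $M=0$ and $A_{j}$ is a field, or $M=(t)$ is principal. In the latter case, $A_{j}$ being Artinian (hence Noetherian) local with principal maximal ideal, the Krull intersection theorem gives $\bigcap_{n}M^{n}=0$, and the standard argument (take the largest $n$ with $I\subseteq M^{n}$ and extract a unit multiple of $t^{n}$ lying in $I$) shows that every nonzero ideal equals some $M^{n}=(t^{n})$; thus $A_{j}$ is a principal ring whose ideals form the chain $A_{j}\supseteq M\supseteq M^{2}\supseteq\cdots\supseteq 0$. Reassembling the factors yields that $A$ is a principal ring.

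The main obstacle is the key claim of the last paragraph: showing that a local ring with at most five ideals has a principal maximal ideal. The subspace count on $M/M^{2}$ has to be carried out uniformly for finite and infinite residue fields, and it is precisely the bound \emph{a plane has at least five subspaces} that pins the threshold at $n_{I}\leq 5$. For $n_{I}=6$ the statement genuinely fails, as the local ring $\mathbb{F}_{2}[X,Y]/(X,Y)^{2}$ shows (its maximal ideal needs two generators), so no argument avoiding this count can succeed.
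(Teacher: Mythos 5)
Your proof is correct, but it takes a genuinely different route from the paper's. The paper argues directly and elementarily: assuming some ideal $(a,b)$ is not principal (finite generation coming from Noetherianity), it plays with the six ideals $0$, $(a)$, $(b)$, $(a+b)$, $(a,b)$ and $A$, showing that each possible coincidence among them would force $(a,b)$ to be principal, so that $n_I(A)\leq 5$ yields a contradiction; no structure theory is invoked. You instead pass through the Artinian decomposition into local factors and then count $k$-subspaces of $M/M^{2}$ via Nakayama. The two arguments are secretly the same count: your extremal example $\mathbb{F}_2[X,Y]/(X,Y)^{2}$ realizes exactly the paper's six-ideal configuration with $a=x$, $b=y$, and the paper's computation is in effect the $\mathbb{F}_2$ (i.e.\ minimal) instance of your subspace count. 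What your version buys is more information: a structural description of the rings in question (a product of at most two factors, each a field or a chain ring whose ideals are the powers of a principal maximal ideal), a uniform treatment of finite and infinite residue fields, and an explanation of why the threshold $5$ is sharp. What the paper's version buys is brevity and self-containedness, though as written it is sketchier than yours (it does not spell out the reduction from finitely generated to two-generated ideals, nor all the distinctness checks). One small redundancy on your side: once you invoke the Artinian decomposition into local rings, the cited factorization $n_I(A)=\prod_{j}\beta_{j}$ is an immediate consequence rather than a needed input.
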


\smallskip

\begin{proof}
Since the ring has a finite number of ideals, it is Noetherian, therefore
all ideals are finite generated. If $A$ has $5$ ideals, let $a,b\in A$ two
distinct non-zero elements and $\left( a\right) $ and $\left( b\right) $ the
ideals generated by $a$ and $\dot{b}$. We suppose that $A$ is not a
principal ideal. Therefore, we have $0,\left( a\right) ,\left( b\right) ,A$
four ideals. It is clear that $\left( a+b\right) $ and $\left( a,b\right) $
are also ideals of $A$. If we have $\left( a\right) =\left( a+b\right) $ it
results that there is $q\in A$ such that $a+b=qa$, then $b\in \left(
a\right) $, false. The same situation is if we suppose that $\left( b\right)
=\left( a+b\right) $. It is clear that $\left( a+b\right) \neq A$, since we
assumed that $A$ is not principal, therefore $\left( a+b\right) =\left(
a,b\right) $ and $A$ is principal.
\end{proof}

\smallskip

\begin{remark}
\label{r29}i) We remark that if $A$ has $2,3$ or $5$ ideals, therefore it is
local, since has only one maximal ideal. If $\ A~$has $4$ ideals, then it is
not local, in general, since we can have two maximal ideals.

ii) Since a commutativ ring which is a principal ideal domain is a
multiplication ring, we have that for a ring with at most $5$ ideals, the
lattice of ideals $(Id(A),\cap ,+,\otimes \rightarrow ,0=\{0\},1=A)$ is a
divisible residuated lattice.

iii) We know that if a ring $A$ has a finite number of ideals, then it is
Artinian and an Artinian ring is a finite direct product of Artinian local
rings. Therefore, if $A=A_{1}\times ...\times A_{q},$ with $A_{i}\,\ $local
Artinian rings, we have that each ideal $J$ in $A$ is of the form $%
J=J_{1}\times ...\times J_{q}$, with $J_{i}$ ideal in $A_{i}$. We obtain
that the ring $A=A_{1}\times ...\times A_{q}$, such that the numbers of
ideals for the rings $A_{i}$ are $n_{A_{i}}$, with $n_{A_{i}}\in
\{2,3,5\},i\in \{1,2,...,q\}$, is a principal ideal ring, therefore a
multiplication ring. For example, if $A=A_{1}\times A_{2}$, with $%
n_{A_{1}}=2 $ and $n_{A_{2}}=5$ is a principal ideal ring with $10$ ideals,
therefore a multiplication ring and the lattice $Id(A)$ is a divisible
residuated lattice. If $A=A_{1}\times A_{2}\times A_{3}$, with $%
n_{A_{1}}=3,n_{A_{2}}=3$ and $n_{A_{3}}=5$, is a principal ideal ring with $%
45$ ideals and $Id(A)$ is a divisible residuated lattice, etc.
\end{remark}

\begin{figure}[tbph]
\centerline{\includegraphics[width=3.7in, height=1.5in]{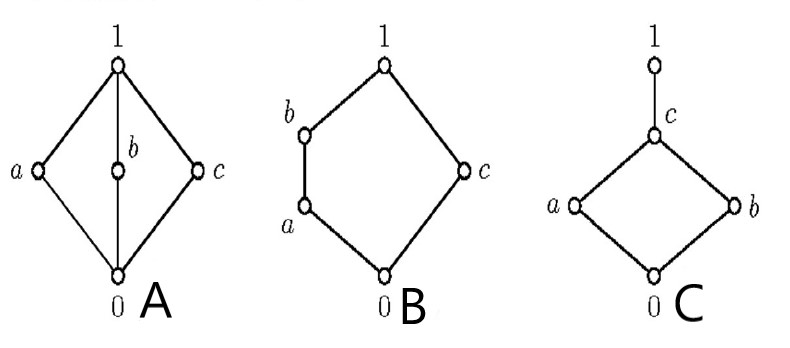}}
\caption{Latices with five elements.}
\end{figure}
\begin{figure}[tbph]
\centerline{\includegraphics[width=3in, height=1.5in]{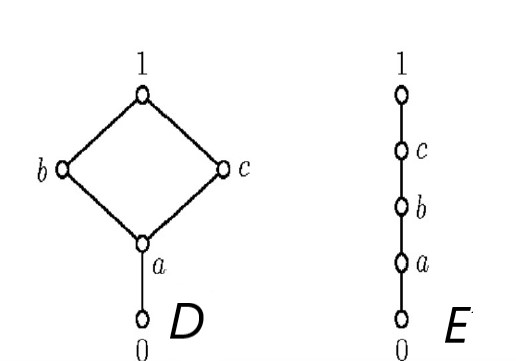}}
\caption{Latices with five elements.}
\end{figure}

\begin{remark}
Since we have only five types of lattices with $5$ elements, from \cite%
{[CFP; 23]}, it results that to obtain divisible residuated lattices (DRL),
the lattice of ideals must be of the type $(Type\_A)\,,(Type\_B)$ or $%
(Type\_C)$. \ $(Type\_B)$ is the pentagon lattice which is not modular and
can be excluded. It remains only $(Type\_$ $A)$, the diamond lattice(which
is not distributive but it is modular), or $(Type\_C)$. But the $(Type\_A)$
involve the existence of a ring with $5$ ideals and three of them to be
maximal ideals, which is false, from the Remark 8. Therefore remains only
the $(Type\_C)$. From here, we have that to find an example of \ DRL which
comes from a multiplication ring and a MTL ring which is not a
multiplication ring, we must search between infinite commutative and unitary
rings and not between finite commutative and unitary rings.
\end{remark}

\smallskip

\begin{definition}
(\cite{[LM; 71]}, p. 150) A unitary ring $A$ is called \textit{arithmetical
ring} if and only if for all ideals $I,J,K$ we have $I\cap \left( J+K\right)
=(I\cap J)+(I\cap K)$, or, equivalently $I+\left( J\cap K\right) =\left(
I+J\right) \cap \left( I+K\right) $, that means the lattice of ideals is
distributive.
\end{definition}

\smallskip

\begin{remark}
1) Using Proposition \ref{p14}, multiplication rings are arithmetical rings

2)In the paper \cite{[MANH]}, the authors studied the class of commutative
rings having the lattice of ideals an MTL-algebra which is not necessary a
BL-algebra. They proved that a local commutative ring with identity is an
MTL-ring if and only if it is an arithmetical ring. Since a MTL-ring is
arithmetical ring and the converse is true if the ring is Noetherian and a
unitary commutative Noetherian MTL-ring is also a BL-ring, therefore, to
obtain a valid example of MTL-ring which is not BL-ring, we must search
outside the Noetherian case.\smallskip

3) From the above, to find an example of MTL-ring $A$ of order $5$ which is
not a BL-ring, the lattice $Id\left( A\right) $ must be on the $(Type\_C)$
since the $(Type\_A)$ is not distributive.\smallskip

In the following, we provide an example of a noncommutative ring $A$ with
the lattice $Id\left( A\right) $ is the diamond lattice.
\end{remark}

\smallskip

\begin{example}
We consider the noncommutative ring $A=\mathcal{M}_{2}\left( \mathbb{Z}%
_{2}\right) $, the set of quadratic matrices over the field $\mathbb{Z}_{2}$%
. We know that $A$ is semisimple, then all right or left ideals are
principal and are generated by idempotents (\cite{[AF; 92]}, p.150). If $%
e\in A$ is an idempotent, then $1-e$ is also an idempotent. We have the
following idempotents: $\{\left( 
\begin{array}{cc}
1 & 0 \\ 
0 & 0%
\end{array}%
\right) ,\left( 
\begin{array}{cc}
0 & 0 \\ 
0 & 1%
\end{array}%
\right) ,\left( 
\begin{array}{cc}
1 & 1 \\ 
0 & 0%
\end{array}%
\right) ,\left( 
\begin{array}{cc}
0 & 1 \\ 
0 & 1%
\end{array}%
\right) ,\left( 
\begin{array}{cc}
1 & 0 \\ 
1 & 0%
\end{array}%
\right) ,\left( 
\begin{array}{cc}
0 & 0 \\ 
1 & 1%
\end{array}%
\right) \}$. We have three proper left ideals: $a=A\left( 
\begin{array}{cc}
1 & 0 \\ 
0 & 0%
\end{array}%
\right) ,b=A\left( 
\begin{array}{cc}
0 & 1 \\ 
0 & 1%
\end{array}%
\right) ,c=A\left( 
\begin{array}{cc}
1 & 0 \\ 
1 & 0%
\end{array}%
\right) $. The lattice $Id\left( A\right) $ is the diamond lattice which is
not a residuated lattice. The ellements $a,b,c$ do not verify the condition $%
3$ from definition of residuated lattices. But the conditions $\left(
prel\right) $ and $\left( div\right) $ are verified.
\end{example}

\bigskip

\begin{equation*}
\end{equation*}

As a further research, we intend to generate MTL residuated lattices which
are not BL-algebras and to provide an example of MTL-algebras of order $5$
which is not BL-algebra.

\bigskip

Cristina Flaut

{\small Faculty of Mathematics and Computer Science, Ovidius University,}

{\small Bd. Mamaia 124, 900527, Constan\c{t}a, Rom\^{a}nia,}

{\small \ http://www.univ-ovidius.ro/math/}

{\small e-mail: cflaut@univ-ovidius.ro; cristina\_flaut@yahoo.com}

\bigskip

Dana Piciu

{\small Faculty of \ Science, University of Craiova, }

{\small A.I. Cuza Street, 13, 200585, Craiova, Romania,}

{\small http://www.math.ucv.ro/dep\_mate/}

{\small e-mail: dana.piciu@edu.ucv.ro, piciudanamarina@yahoo.com}

\end{document}